\newtheorem{fed}{Definition}[section]
\newtheorem{teo}[fed]{Theorem}
\newtheorem*{teo*}{Theorem}
\newtheorem{lem}[fed]{Lemma}
\newtheorem{cor}[fed]{Corollary}
\newtheorem{pro}[fed]{Proposition}
\theoremstyle{definition}
\newtheorem{rem}[fed]{Remark}
\newtheorem{nota}[fed]{Notations}
\newtheorem{exa}[fed]{Example}
\def\coma{\, , \, }
\def\py{\peso{and}}
\newcommand{\peso}[1]{ \quad \text{ #1 } \quad }
\def\n0{n_{ \text{\rm \tiny o}}}
\newcommand{\IN}[1]{\mathbb {I} _{#1}}
\def\In{\mathbb {I} _n}
\def\suml{\sum\limits}
\def\prodl{\prod\limits}
\def\QEDP{\tag*{\QED}}
\def\bce{\begin{center}}
\def\ece{\end{center}}
\DeclareMathOperator{\FP}{FP\,}
\def\cD{\mathcal D}
\def\py{\peso{and}}
\def\rk{\text{\rm rk}}
\def\noi{\noindent}
\def\cF{\mathcal F}
\def\cG{\mathcal G}
\def\QED{\hfill $\square$}
\def\EOE{\hfill $\triangle$}
\def\EOEP{\tag*{\EOE}}
\def\uno{\mathds{1}}
\def\bm{\left[\begin{array}}
\def\em{\end{array}\right]}
\def\ben{\begin{enumerate}}
\def\een{\end{enumerate}}
\def\bit{\begin{itemize}}
\def\eit{\end{itemize}}
\def\barr{\begin{array}}
\def\earr{\end{array}}
\def\igdef{\ \stackrel{\mbox{\tiny{def}}}{=}\ }
\def\la{\lambda}
\def\al{\alpha}
\def\N{\mathbb{N}}
\def\M{\mathbb{M}}
\def\R{\mathbb{R}}
\def\C{\mathbb{C}}
\def\cA{\mathcal{A}}
\def\F{\mathcal{F}}
\def\G{\mathcal{G}}
\def\cH{\mathcal{H}}
\def\cK{\mathcal{K}}
\def\cS{{\cal S}}
\def\cN{{\cal N}}
\def\cV{{\cal V}}
\def\cW{{\cal W}}
\def\rai{^{1/2}}
\def\cX{\mathcal{X}}
\def\cZ{\mathcal{Z}}
\def\orto{^\perp}
\def\inc{\subseteq}
\def\inv{^{-1}}
\def\rai{^{1/2}}
\def\api{\langle}
\def\cpi{\rangle}
\def\ua{^\uparrow}
\def\da{^\downarrow}
 \DeclareMathOperator{\tr}{tr}
\DeclareMathOperator{\leqp}{\leqslant}
\DeclareMathOperator{\geqp}{\geqslant}
\DeclareMathOperator{\convf}{Conv (\R_{\ge0})}
\DeclareMathOperator{\convfs}{Conv_s (\R_{\ge0})}
\newcommand{\hil}{\mathcal{H}}
\newcommand{\op}{B(\mathcal{H})}
\newcommand{\posop}{B(\mathcal{H})^+}
\newcommand{\mat}{\mathcal{M}_d(\mathbb{C})}
\newcommand{\matpos}{\mat^+}
\newcommand{\matinvd}{\mathcal{G}\textit{l}\,(d)}
\def\beq{\begin{equation}}
\def\eeq{\end{equation}}
\def\pausa{\medskip\noi}
\begin{document}
\title{Aliasing and oblique dual pair designs \\for consistent sampling}
%\author{Mar\'\i a J. Benac, Pedro G. Massey, and Demetrio Stojanoff
%\thanks{Partially supported by CONICET 
%(PIP 0435/10) and  Universidad Nacional de La PLata (UNLP 11 X585).} }
\author{Mar\'\i a J. Benac, Pedro G. Massey, and Demetrio Stojanoff 
\footnote{Partially supported by CONICET 
(PIP 0435/10) and  Universidad Nacional de La PLata (UNLP 11X681) %\hspace{19cm}
 e-mail addresses: mjbenac@gmail.com , massey@mate.unlp.edu.ar , demetrio@mate.unlp.edu.ar}
\\ 
{\small Depto. de Matem\'atica, FCE-UNLP,  La Plata, Argentina
and IAM-CONICET  }}
%\author{M.J. Benac, P. Massey and D. Stojanoff}
\date{}
\maketitle

\begin{abstract}
In this paper we study some aspects of oblique duality between finite sequences of vectors $\cF$ and $\cG$ lying in 
finite dimensional subspaces $\cW$ and $\cV$, respectively. We compute the possible eigenvalue lists of the frame operators of oblique duals to $\cF$ lying in $\cV$; we then compute the spectral and geometrical structure of
minimizers of convex potentials among oblique duals for $\cF$ under some restrictions. We obtain a complete quantitative 
analysis of the impact that the relative geometry between the subspaces $\cV$ and $\cW$ has in oblique duality.
We apply this analysis to compute those rigid rotations $U$ for $\cW$ such that the canonical oblique dual of $U\cdot \cF$ 
minimize every convex potential; we also introduce a notion of aliasing for oblique dual pairs and compute  
those rigid rotations $U$ for $\cW$ such that the canonical oblique dual pair associated to $U\cdot \cF$ minimize the aliasing. We point out that these two last problems are intrinsic to the theory of oblique duality.
\end{abstract}
\noindent  AMS subject classification: 42C15, 15A60.

\noindent Keywords: frames, oblique duality,  majorization, 
convex potentials, Lidskii's theorem.

\tableofcontents

\section{Introduction}

A finite sequence $\cF=\{f_i\}_{i\in\In}$ is a frame for a Hilbert space $\cW\cong\C^d$ if $\cF$ spans $\cW$, where $\In=\{1,\ldots,n\}$.  
In this case, a sequence $\cG=\{g_i\}_{i\in\In}$ in $\cW$ is a (classical) dual for $\cF$ in $\cW$ if the following reconstruction formulas holds:
\begin{equation}\label{int 1}
 f=\sum_{i\in\In} \langle f\coma  f_i\rangle \ g_i= \sum_{i\in\In} \langle f\coma  g_i\rangle \ f_i \ , \ \ f\in\cW\ .
 \end{equation}
Hence, frames allow for linear encoding-decoding schemes of vectors in $\cW$ in terms of linear generators for $\cW$. 
Moreover, in case $n>d$ then the set of dual frames for $\cF$ in $\cW$ has a rich structure which plays a key role in applications of finite frame theory to real life situations, such as signal transmission through noisy channels (see \cite{FF,Chrisbook}). Similarly, applications of finite frame theory have lead to consider the so-called frame design problems, i.e. the existence and construction of frames with prescribed properties, based on the flexibility of finite frames  
(see \cite{BF,dhan,MR12,Pot}). 

An important aspect of frames is that of its numerical stability; typically, numerical stability is measured in terms of 
the spread of the eigenvalues of the so-called frame operator $S_\cF$ of a frame $\cF=\{f_i\}_{i\in\In}$, that is given by $S_{\cF}=\sum_{i\in\In}f_i\otimes f_i$. One of the most important measures of the spread of the spectrum of $S_\cF$ is given by the frame potential of $\cF$ (see \cite{BF}) given by $\FP(\cF)=\sum_{i,\,j\in\In} |\langle f_i,\,f_j\rangle |^2=\tr(S_\cF^2)$. Indeed, it turns out that minimizers of the frame potential - within appropriate sets of frames - minimize the spread of the spectrum of their frame operators (see \cite{BF,CKFT}). Recently, there has also been interest in the structure of minimizers of the so-called mean squared error of a frame $\cF$ given by MSE$(\cF)=\tr(S_\cF^{-1})$ - within convenient sets of frames (see \cite{FMP}). This raises the question of whether the minimizers of these two different functionals coincide.
It turns out that there is a natural and structural measure of spread of the spectrum of the frame operators, called submajorization, that has proved useful in explaining the spectral and geometrical structure of both frame potential and mean squared error minimizers (see \cite{MR10,MRS13,MRS14}).

In the seminal paper \cite{YEldar1} Y. Eldar 
developed the theory of oblique duality for finite frames, which is an extended setting for 
linear encoding-decoding schemes in a Hilbert space $\cW$, based on the notion of consistent sampling. As the starting point for this theory, we consider $\cW,\,\cV\subset \hil$ two subspaces of a finite dimensional Hilbert space $\hil$, such that  $\cW \oplus\cV\orto=\hil$ (i.e.  $\cW^\perp + \cV=\hil$ and $\cW^\perp \cap \cV=\{0\}$). Given a frame $\cF=\{f_i\}_{i\in\In}$ for $\cW$ and a frame $\cG=\{g_i\}_{i\in\In}$ for $\cV$ we say that $\cG$ is an oblique dual of $\cF$ if the following reconstruction formula holds 
$$ 
f=\sum_{i\in\In} \langle f\coma g_i\rangle \, f_i \peso{for every} f\in\cW\ .
$$ 
The theory of oblique duality has been both developed and extended in several ways (see \cite{AnAnCo,AnCo,DvEld,CE06,YEldar2}).
On the other hand, it has been successfully applied to study duality for finitely generated shift invariant 
systems on $L^2(\R^d)$ (see \cite{YEldar3,HG07,HG09}). 

There are, however, some aspects of oblique duality that remain to be explored, even in the finite dimensional case.
In this paper, based on several tools coming from matrix analysis, we consider the following problems in oblique duality.
On the one hand,  we study the spectral (and geometrical) structure of oblique duals of the frame $\cF$ for $\cW$ that lie in $\cV$. In this case, we obtain an explicit description of the eigenvalues of the frame operators of oblique duals. With this description at hand, we compute the structure of minimizers of submajorization within the set of oblique duals of $\cF$ under some restrictions. These optimal oblique duals for submajorization turn out to minimize the so-called convex potentials (that include both the frame potential and the mean squared error). 

On the other hand, it has been noticed that the relative position of the subspaces $\cV$ and $\cW$ for which $\cW\oplus \cV^\perp=\hil$ plays a key role when comparing oblique duality to classical duality. This phenomenon has been explored mainly though the angle between the subspaces $\cV$ and $\cW$.
Yet, the angle between the subspaces $\cV$ and $\cW$ only provides qualitative measure of the role of the relative geometry of $\cV$ and $\cW$ in the context of $\cV$-duality. In this paper, we give a detailed description of the role of the relative position of $\cV$ and $\cW$ in the oblique duality of $\cF$ in case the subspaces are finite dimensional. Our analysis relies on a multiplicative Lidskii's inequality and it is based on the complete list of the so-called principal angles between $\cV$ and $\cW$. Our results provide sharp quantitative measures of these relations.

We also consider two problems that are intrinsic to oblique duality. We first notice that the so-called canonical oblique dual $(U\cdot \cF)^\#_\cV$ of a rigid rotation $U\cdot \cF=\{U\,f_i\}_{i\in\In}$ of $\cF$ is, in general, not a rigid rotation of the canonical oblique dual $\cF^\#_\cV$ of $\cF$ (as opposed to classical duality). 
Hence, we compute the rigid rotation $U_0$ of $\cW$ such that the oblique dual $(U_0\cdot\cF)^\#_\cV$ is optimal with respect to submajorization, among all such rigid rotations. Again, this implies a family of inequalities in terms of convex potentials that are relevant for numerical analysis purposes. We also compute the exact value of the aliasing norm of the consistent sampling corresponding to subspaces $\cV$ and $\cW$ and introduce a notion of aliasing for oblique dual pairs. In this context we compute the optimal rigid rotations $U_0$ that minimize the aliasing of the dual pairs $(U\cdot \cF,(U\cdot \cF)^\#_\cV)$ for the fixed frame $\cF$. 

Throughout the paper we consider finite sequences of vectors $\cF$ that are frames 
for finite dimensional subspaces $\cW$ of a possibly infinite dimensional Hilbert space $\hil$, since  
this is the setting that we shall need for future applications of the results herein; on the other hand, 
the assumption that $\hil$ is finite dimensional does not provide any substantial simplification in the proofs of our results.

The paper is organized as follows. In Section \ref{sec prelims} we describe the basic framework of 
oblique duality between finite sequences of vectors, together with some basic facts about convex potentials. In order to deal with these general convex potentials, we also consider submajorization and log-majorization, which are spectral relations between positive finite rank operators (or positive matrices). In particular, we include a multiplicative analogue of Lidskii's additive inequality that plays a crucial role in this note. In Section \ref{Sec 3} we obtain a convenient parametrization of the set of oblique duals of a fixed frame and use it to compute the possible eigenvalues of the frame operators of oblique duals. We then compute the structure of optimal oblique duals for submajorization, under certain restrictions. In Section \ref{sec optimal rot1}, after recalling some standard notions from functional analysis, we compute the rigid rotations $U$ of $\cW$ such that the spread of the eigenvalues of the frame operator of the oblique dual $(U\cdot\cF)^\#_\cV$ is minimal, with respect to submajorization. We also consider the combination of the problems of Section \ref{Sec 3} and Section \ref{sec optimal rot1} i.e., the properties of the optimal oblique dual frame (with norm restrictions) corresponding to the optimal rotation of $\cF$.
In Section \ref{sec 5} we compute the exact value of the aliasing norm of the consistent sampling based on the subspaces 
$\cV$ and $\cW$ and introduce a notion of aliasing for arbitrary oblique dual pairs $(\cF,\cG)$. In this context, we compute the rigid rotations $U$ of $\cW$ that minimize the aliasing for the oblique dual pair $(U\cdot \cF,(U\cdot \cF)^\#_\cV)$.

\section{Preliminaries}\label{sec prelims}

In this section we introduce the notations and basic terminology of frame theory and oblique duality. We also describe 
the convex potentials for finite sequences of vectors (that contain, for example, the Benedetto-Fickus' frame potential) which will serve as a numerical measures of the (relative) spread of the eigenvalues of the frame operators. Finally, we describe some notions from matrix theory that will allow us to deal with these general convex potentials.

\subsection{Oblique dual frames and convex potentials}\label{sec defi oblique duals}

In what follows we consider a fixed complex separable Hilbert space $\hil$. We take 
\beq\label{barraM}
\M=\IN{p} \igdef \{1\coma \ldots \coma p\}  \peso{for \ \ $p\in \N$ \quad or} \M=\N 
\eeq
in such a way that $\dim \hil=|\M|$. 
Let $\cW$ be a closed subspace of $\hil$. Recall that a sequence $\cF=\{f_i\}_{i\in I}$ in $\cW$ is a {\it frame} for $\cW$ if there exist $0<A<B$ such that
\beq\label{defi W frame} 
A \, \|f\|^2\leq \sum_{i\in I}|\langle f\coma f_i\rangle |^2\leq B\,\|f\|^2 \ , \ \ f\in \cW\, . 
\eeq If only the inequality to the right holds, we say that $\cF$ is a Bessel sequence.

\pausa 
In general, given a Bessel sequence $\cF=\{f_i\}_{i\in I}$ we consider its {\it synthesis operator} $T_\cF\in B(\ell^2(I),\hil)$ given by 
$T_\cF((a_i)_{i\in I})=\sum_{i\in I} a_i\ f_i$ which, by hypothesis on $\cF$, is a bounded linear transformation. We also consider $T_\cF^*\in B(\hil,\ell^2(I))$ called the {\it analysis operator} of $\cF$, given by $T_\cF^*(f)=(\langle f,f_i\rangle )_{i\in I}$ and the {\it frame operator} of $\cF$ defined by $S_\cF=T_\cF\,T_\cF^*$. 
It is straightforward to check that
$$ 
\langle S_\cF f\coma f \rangle =\sum_{i\in I}|\langle f\coma f_i\rangle |^2 \ , \ \ f\in \hil\ . 
$$
Hence, $S_\cF$ is a positive semidefinite operator; moreover, a Bessel sequence $\cF$ in $\cW$ 
is a frame for $\cW$ if and only if $S_\cF$ is an invertible operator when restricted to $\cW$ or, equivalently if $T_\cF$ is a surjective operator onto $\cW$.

\pausa
In order to describe oblique duality, we fix two closed subspaces $\cV,\,\cW\inc\hil$ such that $\cW^\perp \oplus \cV=\hil$, that is such that $\cW^\perp + \cV=\hil$ and $\cW^\perp \cap \cV=\{0\}$. Hence, $\cW^\perp$ is a common (algebraic) complement of $\cW$ and $\cV$.
It is well known that in this case $P_\cW|_\cV:\cV\rightarrow \cW$ is a linear bounded isomorphism so, in particular,
we see that $\dim \cV=\dim \cW$ as Hilbert spaces. Moreover, the conditions $\cW^\perp \oplus \cV=\hil$ and $\cW \oplus \cV^\perp=\hil$ are actually equivalent.

\pausa
Fix a frame $\cF=\{f_i\}_{i\in I}$ for $\cW$. Following \cite{YEldar1,YEldar2} (see also  \cite{CE06}), given a Bessel sequence $\cG=\{g_i\}_{i\in I}$  in $\cV$ we say that $\cG$ is a  (oblique) $\cV$-dual of $\cF$   if 
$$ 
g=\sum_{i\in I} \langle g\coma f_i\rangle \ g_i = T_\cG\,T_\cF^* \, g \peso{for every} g\in \cV\, .
$$ 
It turns out (see \cite{YEldar1,YEldar2}) that $\cG$ is a $\cV$-dual of $\cF$ if and only if
$T_\cG\,T_\cF^*=P_{\cV//\cW^\perp}$, where $P_{\cV//\cW^\perp}$ denotes the oblique projection with range $\cV$ and null space $\cW^\perp$.
Hence, $T_\cG$ is surjective onto $\cV$ and then $\cG$ is a frame for $\cV$; by taking adjoints in the identity $T_\cG\,T_\cF^*=P_{\cV//\cW}$ we also get that $T_\cF\,T_\cG^*= P_{\cV//\cW^\perp}^*=P_{\cW//\cV^\perp}$ i.e.
$$ 
f=\sum_{i\in I} \langle f \coma g_i\rangle \ f_i \peso{for every} f\in \cW\, .
$$
We shall consider the set of oblique $\cV$-duals of $\cF$ given by
\begin{equation}\label{eq duales}
\cD_\cV(\cF) \igdef \left\{\cG  
\in\cV^{\, I} : \cG \text{ is a }\cV\text{-dual of } \cF\right\}\ .
\end{equation}

\begin{rem}\label{rem classical canonical dual}
Let $\cF=\{f_i\}_{i\in I} $ be a frame for $\cW$. If we set $\cV=\cW$ then a Bessel sequence $\cG$ in $\cW$ is a $\cW$-dual of $\cF$ if it is a dual frame for $\cF$ in the classical sense (see \cite{Chrisbook}) i.e. $T_\cG\,T_\cF^*=P_\cW$. Hence
$$
\cD_\cW(\cF)=\cD(\cF)\igdef 
\left\{\cG 
\in\cW^{\, I} : \cG\text{ is a dual frame for } \cF \text{ in } \cW\,   \right\}\ .
$$ Recall that there is a distinguished (classical) dual, called the canonical dual of $\cF$, denoted 
$$
\cF^\#=\{f_i^\#\}_{i\in I}  \peso{given by} f_i^\#=S_\cF^\dagger \,f_i  \peso{for every}  i\in I  \ , 
$$
where $S_\cF^\dagger$ denotes the Moore-Penrose pseudo-inverse of the (closed range 
operator) $S_\cF\,$.
\EOE
\end{rem}

\pausa
In the general setting for oblique duality there also exists a distinguished $\cV$-dual for $\cF$, the so-
called {\it canonical $\cV$-dual} which we denote by 
\beq\label{canVdual}
\cF^\#_\cV=\{f^\#_{\cV,\,i}\}_{i\in I} \peso{given by} 
f^\#_{\cV,\,i} =P_{\cV//\cW^\perp} \, f^\#_i  = P_{\cV//\cW^\perp}\, S_\cF^\dagger \, f_i \peso{for every}  i\in I \ ,
\eeq
where $\cF^\#=\{f_i^\#\}_{i\in I}\in\cD(\cF)$ denotes the (classical) canonical dual as described in Remark \ref{rem classical canonical dual}. It turns out that the encoding-decoding scheme based on the oblique dual pair $(\cF\coma \cF^\#_{\cV})$ has several optimality properties (see \cite{YEldar1,YEldar2}).

\medskip

\noindent {\bf Convex potentials for finite sequences in $\hil$}

\pausa 
In their seminal work \cite{BF}, Benedetto and Fickus introduced a functional defined on finite sequences of (unit norm) vectors, the so-called {\it frame potential}, given by 
\beq\label{rep BFpot}
\FP(\{f_i\}_{i\in \In} )
=\sum_{i,\,j\,\in \In}|\api f_i\coma f_j \cpi |\,^2\ .
\eeq 
In case $\dim \hil=p\in\N$ then one of their major results shows that tight unit norm frames - which form an important class of frames because of their simple reconstruction formulas and robustness properties - can be characterized as (local) minimizers of this functional among unit norm frames. Since then, there has been interest in (local) minimizers of the frame potential within certain classes of frames, since such minimizers can be considered as natural substitutes of tight frames (see for example \cite{CKFT,MR10}). 
Notice that, given $\cF=\{f_i\}_{i\in \IN{n}}\in  \cH^n$ then $\FP(\cF)=\tr \, S_\cF^2$. Recently, there has been interest in the structure of frames that minimize other potentials such as the so-called mean squared error (MSE) given by $\text{MSE}(\cF)=\tr(S_\cF^{-1})$ (see \cite{FMP,MRS13,Pot}). Next, we describe a broad family of potentials introduced in \cite{MR10}, that contain both the frame potential and the MSE.

\pausa
In what follows we consider the sets
$$
\convf = \{ 
h:[0 \coma \infty)\rightarrow [0 \coma \infty): h   \ \mbox{ is a convex function }  \ \} 
$$  
and $\convfs = \{h\in \convf : h$ is strictly convex $\}$.

\begin{fed}\label{pot generales}\rm
Given $h\in \convf$ then the {\it convex potential} associated to $h$, denoted by $P_h$, is defined as follows:
for a finite sequence $\cF=\{f_i\}_{i\in \IN{n}}\in  \cH^n$ with  $\cW=\text{Span}\{f_i: \ i\in\In\}$ and frame operator $S_\cF\in B(\hil)^+$, then
$$
%\barr{rl}
P_h(\cF)=\sum_{i\in\IN{d}}h(\lambda_i((S_\cF)_\cW))\ ,
%P_h(\cF)&=\tr \, h((S_\cF)_\cW) 
%\peso {for}  \cF=\{f_i\}_{i\in \IN{n}}\in  \cH^n \ ,
%\ \ \cW=\text{Span}\{f_i: \ i\in\In\}
% \earr
$$ where $d=\dim \cW$ and $(\lambda_i((S_\cF)_\cW))_{i\in\IN{d}}\in \R_{>0}^d$ denotes the vector of eigenvalues of the compression $(S_\cF)_\cW\in B(\cW)^+$, counting multiplicities and arranged in non-increasing order. 
%: hence $\tr \, h((S_\cF)_\cW)$ is well defined). 
\EOE
%$\tr \, h((S_\cF)_\cW) $
%where $h((S_\cF)_\cW)\in B(\cW)^+$ is obtained by means of the usual functional calculus from the compression $(S_\cF)_\cW$ 
\end{fed}

\begin{rem}\label{algo sobre compresion}
With the notations of Definition \ref{pot generales}, 
notice that by construction  $\cW$ is a reductive subspace for $S_\cF$, and hence $(S_\cF)_\cW$ is a well defined positive operator acting on the finite dimensional subspace $\cW$. Moreover, $$P_h(\cF)= \tr \, h((S_\cF)_\cW)$$
where $h((S_\cF)_\cW)\in B(\cW)^+$ is obtained by means of the usual functional calculus from the compression $(S_\cF)_\cW$
and the trace is taken in the finite dimensional Hilbert space $\cW$.
Therefore, in case $h\in\convf$ is such that $h(0)=0$ we get that $$P_h(\cF)=\tr \, h((S_\cF)_\cW)=\tr \, h(S_\cF)\ .$$ In particular, we see that if $h(x)=x^2$ then $P_h(\cF)$ coincides with the frame potential. \EOE
\end{rem}

\pausa
Fix $h\in \convf$ and consider its associated convex potential $P_h$. If $\cF=\{f_i\}_{i\in \In}$ is a finite sequence in $\cH^n$ then $P_h(\cF)$ is a measure of the spread of the eigenvalues of the frame operator of $\cF$. That is, (under suitable normalization hypothesis on $\cF$) the smaller the value $P_h(\cF)$ is, the more concentrated the non-zero eigenvalues of $S_\cF$ are (see \cite{MR10,MRS13,MRS14,Pot}).

\pausa
In order to deal with these general convex potentials we consider the notions of submajorization and log-majorization in the next section.

\subsection{(Log-)majorization and convex functions}\label{sec 2.2.}

Next we briefly describe sub-majorization, majorization and log-majorization, that are notions from matrix analysis. For a detailed exposition on these relations see \cite{Bat}.

\pausa
 Given $x,\,y\in \R_{\geq 0}^d$ we say that $x$ is
{\it submajorized} by $y$, and write $x\prec_w y$,  if
$$
\suml_{i=1}^k x^\downarrow _i\leq \suml_{i=1}^k y^\downarrow _i \peso{for every} k\in \mathbb I_d=\{1,\ldots,d\} \ ,
$$
where $z\da\in\R^d$ (respectively $z\ua\in \R^d$) denotes the vector obtained by re-arrangement of the entries of $z\in\R^d$ in non-increasing (respectively non-decreasing) order.
If $x\prec_w y$ and $\tr x = \sum_{i=1}^dx_i=\sum_{i=1}^d y_i = \tr y$,  then $x$ is
{\it majorized} by $y$, and write $x\prec y$.

\pausa
Log-majorization between vectors in $\R_{\geq 0}^d$ is a multiplicative analogue of majorization in $\R_{\geq 0}^d$. Indeed, given $x,\, y\in \R_{\geq 0}^d$ we say that $x$ is {\it log-majorized} by $y$, denoted $x\prec_{\log} y$, if
$$
\prodl_{i=1}^k x^\downarrow _i\leq \prodl_{i=1}^k y^\downarrow _i \peso{for every} k\in \mathbb I_{d-1} \peso{and}
\prodl_{i=1}^d x^\downarrow _i= \prodl_{i=1}^d y^\downarrow _i \ .$$
It is known (see \cite{Bat}) that if $x,\, y\in \R_{\geq  0}^d$ are such that $x\prec_{\log}$ then $x\prec_w y\ $.
On the other hand we write
$x \leqp y$ if $x_i \le y_i$ for every $i\in \mathbb I_d \,$.  It is a standard  exercise
to show that if $x,\, y\in \R_{\geq 0}^d$ then $x\leqp y \implies x^\downarrow\leqp y^\downarrow \implies x\prec_{\log} y \implies x\prec_w y $.

\pausa
Our interest in majorization is motivated by the
relation of this notion with tracial inequalities
for convex functions. Indeed, 
 given $x,\,y\in \R_{\geq 0}^d$ and  $h\in\convf $, then (see for example \cite{Bat}):
\ben
\item If one assumes that $x\prec y$, then
$
\tr h(x) \igdef\suml_{i=1}^dh(x_i)\leq \suml_{i=1}^dh(y_i)=\tr h(y)\ .
$
\item If only $x\prec_w y$,  but the map $h$ is also non-decreasing, then  still
$\tr h(x) \le \tr h(y)$.
\item If $x\prec_w y$, $h\in\convfs$ is non-decreasing and $\tr \,h(x) =\tr \, h(y)$, then there exists a permutation $\sigma$
of $\IN{d}$ such that $y_i=x_{\sigma(i)}$ for $i\in \IN{d}\,$.
\een

\pausa
The following result is a multiplicative Lidskii's inequality for matrices, that also contains a detailed description of the case of {\it equality}. In what follows, given $x=(x_i)_{i\in\IN{d}},\,y=(y_i)_{i\in\IN{d}}\in\R^d$ then $x\circ y=(x_i\,y_i)_{i\in\IN{d}}\in\R^d$ denotes the entry-wise product of the vectors. Also, given a selfadjoint matrix $A\in\mat$
then $\la(A)\in\R^d$ denotes the eigenvalues of $A$, counting multiplicities and arranged in non-increasing order.

\begin{teo}[\cite{MRS14}]\label{teo hay max y min mayo}
Let $S\in\matinvd^+$ and let $\lambda\in (\R_{>0}^d)\da$.
Then, for every $V\in\mat$  such that $\lambda(V^*V)=\lambda$ we have that
\beq\label{max y min logamyo}
\lambda(S)\circ \lambda\ua\prec_{\log}\lambda(VSV^*)\prec_{\log}\lambda(S)\circ \lambda
\in (\R_{>0}^d)\da \ .
\eeq
Moreover, if $\lambda(VSV^*)=(\lambda(S)\circ \lambda\ua)\da$
(resp.
$\lambda(VSV^*)=\lambda(S)\circ \lambda$) then there exists an o.n.b.
$\{ v_i\}_{i\in \IN{d}}$ of $\C^d$ such that
\beq\label{hay base}
S=\sum_{i\in \IN{d}} \lambda_i(S)\ v_i\otimes v_i \peso{and} |V|=
\sum_{i\in \IN{d}} \lambda_{d+1-i}\rai\ v_i\otimes v_i\
\eeq
\big(\,resp. $S=\sum_{i\in \IN{d}} \lambda_i(S)\ v_i\otimes v_i$ and
$|V|=\sum_{i\in \IN{d}} \lambda_{i}\rai\ v_i\otimes v_i$\,\big).\qed
\end{teo}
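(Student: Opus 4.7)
The plan is to reduce the claim to a statement about the spectrum of a product of two positive definite matrices and then invoke the classical multiplicative (Horn--Gelfand--Naimark) log-majorization inequality. First, since $\lambda(V^*V)=\lambda\in(\R_{>0}^d)\da$, $V$ is invertible; using the polar decomposition $V=U|V|$ with $U\in\matud$ and $|V|=(V^*V)\rai\in\matinvd^+$, we have
$$
\lambda(VSV^*)=\lambda(|V|S|V|)=\lambda(S\rai V^*V\,S\rai),
$$
by unitary similarity and by cyclic invariance of nonzero eigenvalues. Setting $A=V^*V\in\matinvd^+$ with $\lambda(A)=\lambda$, the claim becomes the double log-majorization
$$
\lambda(S)\circ\lambda\ua\ \prec_{\log}\ \lambda(S\rai AS\rai)\ \prec_{\log}\ \lambda(S)\circ\lambda.
$$
The right-hand inequality is Horn's classical log-majorization for products of positive matrices; the left-hand one follows by passing to inverses (so that $S\inv$ and $A\inv$ play the roles of $S$ and $A$) and reversing the ordering of eigenvalues, using $\lambda_k(M\inv)=\lambda_{d+1-k}(M)\inv$ for $M\in\matinvd^+$.

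For the equality case, suppose $\lambda(VSV^*)=\lambda(S)\circ\lambda$. Then equality must hold at every step of Horn's inequality; in particular $\lambda_1(S\rai AS\rai)=\lambda_1(S)\,\lambda_1(A)$. A Rayleigh-quotient argument then forces $S$ and $A$ to share a top eigenvector: if $x$ is a unit top eigenvector of $S\rai AS\rai$, then
$$
\lambda_1(S)\,\lambda_1(A)=\langle A\,S\rai x,S\rai x\rangle=\|S\rai x\|^2\,\langle Ay,y\rangle\le\lambda_1(S)\,\lambda_1(A),
$$
with $y=S\rai x/\|S\rai x\|$; equality forces $\|S\rai x\|^2=\lambda_1(S)$, so $Sx=\lambda_1(S)x$ and $y=x$, whence $Ax=\lambda_1(A)x$. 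Setting $v_1=x$, both $S$ and $A$ leave $\{v_1\}\orto$ invariant, and iterating the argument on this $(d-1)$-dimensional subspace produces an orthonormal basis $\{v_i\}_{i\in\IN{d}}$ that simultaneously diagonalizes $S$ and $|V|=A\rai$ with matched decreasing orderings---which is exactly \eqref{hay base}. The case $\lambda(VSV^*)=(\lambda(S)\circ\lambda\ua)\da$ is handled symmetrically and yields a basis in which $S$ has decreasing eigenvalues while $|V|$ has increasing ones.

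The main obstacle is the equality analysis: while the log-majorization itself is a classical fact, extracting the precise simultaneous diagonalization in \eqref{hay base} requires the inductive Rayleigh-quotient bookkeeping sketched above, together with a careful check that the matched orderings come out correctly in the lower-bound case (the roles of $\lambda\da$ and $\lambda\ua$ must be tracked through the induction). In the present paper the theorem is imported as a black box from \cite{MRS14}, so this detailed argument is not reproduced but merely cited.
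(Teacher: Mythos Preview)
Your observation is correct: in the present paper this theorem carries only a \qed\ and is imported wholesale from \cite{MRS14}; there is no proof here to compare against. Your closing remark already acknowledges this, so on the level of ``matching the paper's proof'' there is nothing further to say.

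As for the sketch you offer, the reduction via polar decomposition to the eigenvalues of $S\rai A S\rai$ with $A=V^*V$ is correct, and the upper log-majorization is indeed Horn's inequality. One caveat: your claim that the lower bound ``follows by passing to inverses'' is too quick. Applying Horn to $S\inv$ and $A\inv$ only yields control on the \emph{bottom} partial products $\prod_{i=d-k+1}^d\lambda_i(S\rai AS\rai)$, which combined with the determinant identity just reproduces the upper bound. To obtain $\lambda(S)\circ\lambda\ua\prec_{\log}\lambda(S\rai AS\rai)$ one instead writes $S=A^{-1/2}(A\rai S A\rai)A^{-1/2}$ and applies Horn to this factorization; even then one gets $\prod_{i\in\IN{k}}\lambda_i(S)\,\lambda_{d+1-i}\le\prod_{i\in\IN{k}}\lambda_i(S\rai AS\rai)$ only for the \emph{natural} ordering of $\lambda(S)\circ\lambda\ua$, and upgrading to the decreasing rearrangement requires the full multiplicative Lidskii/Gel'fand--Na\u{\i}mark argument (see \cite{Bat}, Chapter~III). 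So the lower bound is classical, but not a one-line corollary of the upper one.

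Your equality analysis via the Rayleigh-quotient argument and induction on $\{v_1\}\orto$ is correct and is the standard route; it would produce exactly the simultaneous diagonalization in \eqref{hay base}. The symmetric case for the lower bound is handled the same way after the substitution $A\mapsto A\inv$.
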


\section{Spectral structure and optimal oblique duals}\label{Sec 3}

In this section we obtain a simple and explicit description of the eigenvalues of the frame operators of oblique duals of a fixed frame. We then apply this result to study the existence and structure of oblique duals that are optimal among oblique duals with some restrictions. 

\subsection{Spectral structure of oblique duals}

Let $\cV$ and $\cW$ be closed subspaces of the Hilbert space $\cH$ such that $\cW^\perp\oplus\cV=\cH$. Let $\cF=\{f_i\}_{i\in I}$ be a sequence in $\cW$ that is a frame for $\cW$. There are several known characterizations of the elements in $\cD_\cV(\cF)$ (see for example \cite{CE06}). 
In what follows we describe a simple {\it parametrization} of $\cD_\cV(\cF)$ in terms of $\cD_\cW(\cF)$ i.e. the classical dual frames for $\cF$ in $\cW$, which is implicit in \cite{CE06}.

\begin{pro}\label{repre dual v}
Let $\cV$ and $\cW$ be closed subspaces of $\cH$ such that 
$\cW\orto \oplus \cV=\hil$.
Let $\cF=\{f_i\}_{i\in I}$ be a frame for $\cW$. Then the map 
$$ 
\cD(\cF)\ni \{g_i\}_{i\in I} \mapsto 
\{P_{\cV// \cW^\perp} \, g_i\}_{i\in I}\in \cD_\cV(\cF)
$$ 
is a (linear) bijection between $\cD(\cF)$ and $\cD_\cV(\cF)$ that sends $\cF^\#$ to $\cF_\cV^\#$.
\end{pro}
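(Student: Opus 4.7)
The plan is to let $Q := P_{\cV//\cW^\perp}$ and verify in turn the four ingredients of the claim: (i) well-definedness of the map into $\cD_\cV(\cF)$, (ii) linearity, (iii) injectivity, and (iv) surjectivity. The image of $\cF^\#$ then falls out directly from the definition of $\cF^\#_\cV$ in \eqref{canVdual}.

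The key identity I would use throughout is $Q\,P_\cW=Q$ on $\hil$, which holds because for every $f\in\hil$ the difference $f-P_\cW f$ lies in $\cW^\perp=\ker Q$. Given $\cG=\{g_i\}_{i\in I}\in\cD(\cF)$, the sequence $\cG':=\{Qg_i\}_{i\in I}$ lies in $\cV$ and its synthesis operator factors as $T_{\cG'}=Q\,T_\cG$, so
$$
T_{\cG'}\,T_\cF^*\ =\ Q\,T_\cG\,T_\cF^*\ =\ Q\,P_\cW\ =\ Q\ =\ P_{\cV//\cW^\perp}\ ,
$$
which is precisely the defining condition $\cG'\in\cD_\cV(\cF)$. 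Linearity of $\{g_i\}_{i\in I}\mapsto\{Qg_i\}_{i\in I}$ is immediate from linearity of $Q$.

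Injectivity and surjectivity both rest on the standard fact recalled in Section \ref{sec defi oblique duals} that $P_\cW|_\cV:\cV\to\cW$ is a bounded linear isomorphism; a short direct computation (or taking inverses) shows that $Q|_\cW:\cW\to\cV$ is the inverse isomorphism, so in particular $\cW\cap\ker Q=\cW\cap\cW^\perp=\{0\}$ and $Q(\cW)=\cV$. For injectivity, if $Qg_i=Q\tilde g_i$ with $\cG,\tilde\cG\in\cD(\cF)$, then $g_i-\tilde g_i\in\cW\cap\cW^\perp=\{0\}$. For surjectivity, given $\cG'=\{g_i'\}_{i\in I}\in\cD_\cV(\cF)$, I would set $g_i:=P_\cW\,g_i'\in\cW$; since $P_\cW|_\cV=(Q|_\cW)^{-1}$, we have $Q\,g_i=g_i'$, hence $T_{\cG'}=Q\,T_\cG$, and the identity $T_{\cG'}\,T_\cF^*=Q=Q\,P_\cW$ rewrites as $Q\,T_\cG\,T_\cF^*=Q\,P_\cW$. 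Since both $T_\cG\,T_\cF^*$ and $P_\cW$ take values in $\cW$, and $Q|_\cW$ is injective, this forces $T_\cG\,T_\cF^*=P_\cW$, i.e.\ $\cG\in\cD(\cF)$.

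Finally, the image of the canonical classical dual is $\cF^\#\mapsto\{Q\,f_i^\#\}=\{P_{\cV//\cW^\perp}\,S_\cF^\dagger f_i\}=\cF^\#_\cV$ by \eqref{canVdual}. There is no substantial obstacle in this proposition: the whole argument reduces to exploiting the single identity $Q\,P_\cW=Q$ together with the isomorphism property of $Q|_\cW$, both of which are direct consequences of the hypothesis $\cW^\perp\oplus\cV=\hil$.
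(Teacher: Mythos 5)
Your proof is correct and follows essentially the same route as the paper: well-definedness via $Q\,P_\cW=Q$, injectivity via $\cW\cap\cW^\perp=\{0\}$, and surjectivity via the isomorphism $Q|_\cW:\cW\to\cV$. The only cosmetic difference is that you explicitly identify the inverse as $P_\cW|_\cV$, whereas the paper invokes the isomorphism abstractly and then verifies $T_\cG T_\cF^*=P_\cW$ by the same range-in-$\cW$ argument you use.
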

\begin{proof}
Let $\cG=\{g_i\}_{i\in I}\in \cD(\cF)$ and set $\cG'=
\{P_{\cV// \cW^\perp}\, g_i\}_{i\in I}\,$. 
Then $T_{\cG'}=P_{\cV// \cW^\perp} \,T_\cG$ and hence
$$
T_{\cG'}\, T_\cF^*= P_{\cV// \cW^\perp} \, T_\cG \,T_\cF^*
=P_{\cV// \cW^\perp} \, P_\cW=P_{\cV// \cW^\perp}\ .
$$ 
Therefore $\cG'\in \cD_\cV(\cF)$ and the map is well defined.
To check that the map is injective, let $\cG=\{g_i\}_{i\in I} $ and $  \cK=\{k_i\}_{i\in I} \in \cD(\cF)$ be such that $P_{\cV// \cW^\perp} g_i= P_{\cV// \cW^\perp} k_i$ for $i\in I$. Then, 
$$
P_{\cV//\cW^{\perp}}T_{\G}=P_{\cV//\cW^{\perp}}T_{\cK}  \implies 
P_{\cV//\cW^{\perp}}(T_{\cG}-T_{\cK})=0  \implies 
R(T_{\cG}-T_{\cK})\inc \cW^\perp \ . 
$$
But also $R(T_{\cG}-T_{\cK})\inc \cW$, so $R(T_{\cG}-T_{\cK})=\{0\}$ and $T_{\cG} =T_{\cK} \,$.

\pausa
Finally we check that the map is surjective. Recall that, 
since $\cW\orto \oplus \cV=\hil$, then the map 
 $P_{\cV//\cW^\perp}|_{\cW}:\cW\rightarrow \cV$ is a linear bounded isomorphism.
Thus, given $\cK=\{k_i\}_{i\in I}\in \cD_\cV(\cF)$ there exists a unique Bessel sequence $\cG=\{g_i\}_{i\in I}$ in $\cW$ such that $P_{\cV//\cW^\perp} g_i=k_i$ for $i\in I$. Then, $P_{\cV//\cW^\perp} T_\cG=T_\cK$ and therefore
$$ 
P_{\cV//\cW^\perp}=T_\cK\,T_\cF^*= P_{\cV//\cW^\perp} T_\cG \, T_\cF^* \ \
\implies \ \ P_{\cV//\cW^\perp} (T_\cG \, T_\cF^* - P_\cW)=0\ .
$$
Since $R(T_\cG \, T_\cF^* - P_\cW)\subseteq \cW$ then previous equation implies that $T_\cG \, T_\cF^* - P_\cW=0$ and hence $\cG\in \cD(\cF)$ is such that $\{P_{\cV//\cW^\perp} \,g_i \}_{i\in I}=\cK$.
\end{proof}

\pausa
The previous result allows to obtain several other representations of the $\cV$-duals of $\cF$ from the classical theory of dual frames for $\cF$ in $\cW$. The following result is an example of this phenomenon

\begin{cor}\label{cororepre}
Let $\cV$ and $\cW$ be closed subspaces of $\cH$ such that $\cW\orto \oplus \cV=\hil$.
Let $\cF=\{f_i\}_{i\in I}$ be a frame for $\cW$ with canonical $\cV$-dual frame $\cF^\#_\cV=\{f^\#_{\cV,\,i}\}_{i\in I}$ 
defined in  Eq. \eqref{canVdual}. 
Given any  $\cG 
\in \cV^{\, I} $, then $\cG\in \cD_\cV(\cF) \iff $ there exists a Bessel sequence 
$\cZ=\{z_i\}_{i\in I}\in \cV^{\, I}$ such that
$$ 
T_\cZ \, T_\cF^* \, f = \sum_{i\in I} \langle f\coma f_i\rangle \ z_i=0 \peso{for every} 
f\in \hil \peso{and} \cG=\{f^\#_{\cV,\,i}+z_i\}_{i\in I}\ .$$
\end{cor}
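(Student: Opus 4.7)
The statement is the oblique analogue of the classical parametrization of dual frames, so the plan is to transfer that classical result through the bijection of Proposition \ref{repre dual v}. Recall that the classical version asserts $\cG\in\cD(\cF)$ iff $\cG=\cF^\#+\cY$ for some Bessel sequence $\cY=\{y_i\}_{i\in I}$ in $\cW$ with $T_\cY\,T_\cF^*=0$ (this is immediate from $T_\cG\,T_\cF^*=P_\cW=T_{\cF^\#}\,T_\cF^*$). The corollary then follows by applying $P_{\cV//\cW^\perp}$ term-wise, using that this map sends $\cF^\#$ to $\cF^\#_\cV$ and preserves synthesis operators up to the factor $P_{\cV//\cW^\perp}$.

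More directly, I would avoid invoking the lifting to $\cW$ and just argue in $\cV$ as follows. For the forward implication, assume $\cG=\{g_i\}_{i\in I}\in\cD_\cV(\cF)$ and set $z_i=g_i-f^\#_{\cV,i}\in\cV$ for $i\in I$. Since both $\cG$ and $\cF^\#_\cV$ are Bessel sequences in $\cV$, so is $\cZ=\{z_i\}_{i\in I}$, and by linearity of the synthesis operator
$$
T_\cZ\,T_\cF^*=T_\cG\,T_\cF^*-T_{\cF^\#_\cV}\,T_\cF^*=P_{\cV//\cW^\perp}-P_{\cV//\cW^\perp}=0,
$$
which is precisely the stated condition $\sum_{i\in I}\langle f\coma f_i\rangle\,z_i=0$ for every $f\in\hil$, and the decomposition $\cG=\{f^\#_{\cV,i}+z_i\}_{i\in I}$ is immediate.

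For the backward implication, given any Bessel $\cZ=\{z_i\}_{i\in I}\in\cV^{\,I}$ with $T_\cZ\,T_\cF^*=0$, the sequence $\cG=\{f^\#_{\cV,i}+z_i\}_{i\in I}$ lies in $\cV^{\,I}$ and satisfies
$$
T_\cG\,T_\cF^*=T_{\cF^\#_\cV}\,T_\cF^*+T_\cZ\,T_\cF^*=P_{\cV//\cW^\perp}+0=P_{\cV//\cW^\perp},
$$
so $\cG\in\cD_\cV(\cF)$ by the characterization recalled in Subsection \ref{sec defi oblique duals}.

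There is no real obstacle here; the proof is a two-line algebraic verification once the canonical $\cV$-dual $\cF^\#_\cV$ is available as a fixed reference point in $\cD_\cV(\cF)$. The only thing worth noting carefully is that the Bessel property of $\cZ$ is automatic in the forward direction (as a difference of Bessel sequences) and is assumed in the backward direction, and that $z_i\in\cV$ is preserved under the addition, so the parametrization genuinely lives inside $\cV^{\,I}$ as required.
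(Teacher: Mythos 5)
Your proposal is correct, and you actually offer two routes. The first route you sketch (transferring the classical decomposition $\cK=\cF^\#+\cX$ with $T_\cX T_\cF^*=0$ through the bijection of Proposition \ref{repre dual v}) is precisely the proof the paper gives: the authors take $\cK\in\cD(\cF)$ with $\cG=P_{\cV//\cW^\perp}\cdot\cK$, decompose $\cK=\{f_i^\#+x_i\}$, and set $\cZ=\{P_{\cV//\cW^\perp}x_i\}$. The second, ``more direct'' argument you develop is genuinely different: you bypass the lifting to $\cW$ entirely and just take $z_i=g_i-f^\#_{\cV,i}$ in $\cV$, observing that $T_\cZ T_\cF^*=T_\cG T_\cF^*-T_{\cF^\#_\cV}T_\cF^*=P_{\cV//\cW^\perp}-P_{\cV//\cW^\perp}=0$ and that Besselness is preserved under taking differences (and sums, for the converse). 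This is shorter and more self-contained, since it uses only the characterization $T_\cG T_\cF^*=P_{\cV//\cW^\perp}$ of $\cV$-duals and the fact that $\cF^\#_\cV\in\cD_\cV(\cF)$, with no appeal to Proposition \ref{repre dual v} or the classical parametrization. The reason the paper nonetheless goes through the bijection is likely expository: the corollary is offered as an illustration of how Proposition \ref{repre dual v} transfers classical dual-frame facts to the oblique setting, so the longer route is the point there. Your direct computation is, however, entirely correct and arguably the cleaner proof of the statement in isolation.
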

\begin{proof}
Let $\cK=\{k_i\}_{i\in I}\in \cD(\cF)$ be such that $\cG=\{P_{\cV//\cW^\perp}\,k_i\}_{i\in I}$ as in Proposition \ref{repre dual v}. Since  $\cK\in \cD(\cF)$, it is well known that there exists a Bessel sequence $\cX=\{x_i\}_{i\in I}$ in $\cW$ such that $\cK=\{f_i^\#+x_i\}_{i\in I}$ and such that $T_\cX\,T_\cF^*=0$, where $T_\cX$ denotes the synthesis operator of $\cX$ (see for example \cite{Chrisbook}).
Hence,
$\cG=\{P_{\cV//\cW^\perp}(f_i^\#+x_i)\}_{i\in I}$ which shows that $T_\cG=T_{\cF^\#_\cV}+ P_{\cV//\cW^\perp} T_\cX$ with $(P_{\cV//\cW^\perp} T_\cX)T_\cF^*=0$ and the result holds for $\cZ=\{P_{\cV//\cW^\perp} x_i\}_{i\in I}\in \cV$. The converse is straightforward. 
\end{proof}

\pausa
From now on, we shall restrict our attention to finite sequences of vectors in $\hil$; accordingly, we shall consider
decompositions $\cW^\perp\oplus\cV=\cH$, where
$\cV$ and $\cW$ are finite dimensional subspaces of the Hilbert space $\cH$.

\pausa
In what follows, we shall be concerned with the spectral properties of frame operators of $\cV$-duals of $\cF$. Thus, we introduce some convenient notations.
\begin{fed}\label{defn all dual frames}\rm
Let $\cV$ and $\cW$ be finite dimensional subspaces of the Hilbert space $\cH$ such that $\cW^\perp\oplus\cV=\cH$.  
Let 
$\cF=\{f_i\}_{i\in \In}$ be a frame for $\cW$.  We consider
$$
\cS \cD_\cV(\cF) \igdef
\{S_\cG=T_\cG\,T_\cG^*: \ \cG\in\cD_\cV(\cF)\}\subset \posop\ . $$
the set of frame operators of $\cV$-dual frames of $\cF$. \EOE
\end{fed}

\begin{pro}\label{pro tec}
Let $\cV$ and $\cW$ be finite dimensional subspaces of the Hilbert space $\cH$ such that $\cW^\perp\oplus\cV=\cH$ and
let $\dim \cV=\dim \cW=d$. Let $\cF=\{f_i\}_{i\in \In}$ be a frame for $\cW$. Then,
$$
\cS \cD _\cV(\cF)=\left\{S_{\F^\#_\cV}+B: \   \ B\in\posop\ , \ R(B)\subseteq\cV \py \rk \, B\leq n-d \right\}.$$
\end{pro}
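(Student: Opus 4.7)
The plan is to parametrize $\cS\cD_\cV(\cF)$ via the affine description of $\cD_\cV(\cF)$ given by Corollary \ref{cororepre}, and then verify that a convenient orthogonality forces the cross terms in the corresponding frame operators to vanish. Concretely, any $\cG\in\cD_\cV(\cF)$ has the form $\cG=\{f^\#_{\cV,i}+z_i\}_{i\in\In}$ for some Bessel sequence $\cZ=\{z_i\}_{i\in\In}$ in $\cV$ satisfying $T_\cZ\,T_\cF^*=0$, so
\[
S_\cG = (T_{\cF^\#_\cV}+T_\cZ)(T_{\cF^\#_\cV}+T_\cZ)^* = S_{\cF^\#_\cV} + T_{\cF^\#_\cV}T_\cZ^* + T_\cZ T_{\cF^\#_\cV}^* + T_\cZ T_\cZ^*.
\]

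Next I would use the explicit formula $T_{\cF^\#_\cV}=P_{\cV//\cW^\perp}\,S_\cF^\dagger\,T_\cF$ coming from \eqref{canVdual}. Together with $T_\cF T_\cZ^*=(T_\cZ T_\cF^*)^*=0$ this gives $T_{\cF^\#_\cV}T_\cZ^*=P_{\cV//\cW^\perp}S_\cF^\dagger(T_\cF T_\cZ^*)=0$, so the cross terms vanish and $S_\cG=S_{\cF^\#_\cV}+B$ where $B\igdef T_\cZ T_\cZ^*\in B(\cH)^+$. Since $R(T_\cZ)\subseteq\cV$, we have $R(B)\subseteq\cV$; and since $R(T_\cF^*)\subseteq\ker T_\cZ$ with $\dim R(T_\cF^*)=d$, we obtain $\rk B=\rk T_\cZ\leq n-d$. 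This proves the inclusion $\subseteq$.

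For the reverse inclusion, fix $B\in B(\cH)^+$ with $R(B)\subseteq\cV$ and $r\igdef\rk B\leq n-d$, and write the spectral decomposition $B=\sum_{j=1}^r\sigma_j\,v_j\otimes v_j$ with $\sigma_j>0$ and $\{v_j\}_{j=1}^r$ an orthonormal system in $\cV$. Since $\dim\ker T_\cF=n-d\ge r$, I would pick an orthonormal set $\{e_j\}_{j=1}^r$ inside $\ker T_\cF\subseteq\C^n$ and define an operator $C:\C^n\to\cH$ by $Ce_j=\sqrt{\sigma_j}\,v_j$ on these vectors and $C=0$ on the orthogonal complement of $\gen\{e_j\}_{j=1}^r$ in $\C^n$. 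Then $R(C)\subseteq\cV$, $C\,T_\cF^*=0$ (because $R(T_\cF^*)=(\ker T_\cF)^\perp\subseteq\ker C$), and $CC^*=B$. Setting $\cZ=\{Ce^{(n)}_i\}_{i\in\In}$ for the canonical basis of $\C^n$ and applying Corollary \ref{cororepre} yields $\cG\igdef\{f^\#_{\cV,i}+z_i\}_{i\in\In}\in\cD_\cV(\cF)$, and by the cancellation of cross terms above, $S_\cG=S_{\cF^\#_\cV}+B$, completing the proof.

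The only delicate point is the vanishing of the cross terms, which hinges on the identity $T_{\cF^\#_\cV}=P_{\cV//\cW^\perp}S_\cF^\dagger T_\cF$; once this is in hand the rest is a direct verification. The converse construction is also routine, the only care being to match the dimension count $r\leq n-d$ with the dimension of $\ker T_\cF$ so that one can place the spectral data of $B$ inside $\ker T_\cF$ to automatically satisfy $T_\cZ T_\cF^*=0$.
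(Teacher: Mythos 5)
Your proof is correct and follows essentially the same route as the paper: both directions rest on the parametrization from Corollary \ref{cororepre}, the identity $T_{\cF^\#_\cV}=P_{\cV//\cW^\perp}S_\cF^\dagger T_\cF$ to kill the cross terms, and, for the converse, a factorization $B=ZZ^*$ with $Z T_\cF^*=0$ placed inside $\ker T_\cF$ (your explicit spectral construction of $C$ and the paper's partial isometry $Z=B^{1/2}W$ are the same device).
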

\begin{proof}
Given $\cG\in\cD_\cV(\cF)$, Corollary \ref{cororepre} shows that there exists a Bessel sequence $\cZ=\{z_i\}_{i\in \In}$ 
in $\cV$ such that
$T_\cG=T_{\cF_\cV^\#}+T_\cZ$ and $T_\cZ \,T_\cF^*=0$. Notice that 
$T_{\cF_\cV^\#}=P_{\cV//\cW^\perp} \, T_{\cF^\#}=P_{\cV//\cW^\perp} \, S_\cF^\dagger \,T_\cF$ 
which implies that $T_\cZ \,T_{\cF_\cV^\#}^*=0$. Hence,
$$
S_\cG=T_\cG\, T_\cG^*=(T_{\cF^\#_\cV}+T_\cZ)(T_{\cF^\#_\cV}+T_\cZ)^*=S_{\cF^\#_\cV}+S_{\cZ}\, ,   $$
where $S_\cZ\in \posop$ is the frame operator of $\cZ$, which is a finite rank operator. 
Since $T_\cZ T_\cF^*=0$  then $\dim \ker T_\cZ\geq d$.
Therefore,  $R(S_\cZ)=R(T_\cZ)$ so that $R(S_\cZ)\subset \cV$ and $\rk \, S_\cZ =\rk \, T_\cZ\leq n-d$.

\pausa
Conversely, let $B\in\posop$ be such that $R(B)\subseteq \cV$ and $\text{rk}(B)\le n-d$. 
Then, there exists $Z \in B(\C^n 
\coma \cV)$, such that $Z\, T_\cF^*=0$ and $B = ZZ^*$: indeed, since $\dim (R(T_\cF^*)^\perp)=n-d$ there exists a partial isometry $W\in 
B(\C^n 
\coma \cV)$ 
with initial space $\ker W ^\perp\subset R(T_\cF^*)^\perp$ and final space $R(B)=R(B^{1/2})$ so that $Z=B^{1/2}W$ has the desired properties.
If we let $\{e_i\}_{i\in\In}$ denote the canonical basis of $\C^n$ 
and $\cG=\{(T_{\cF^\#_\cV}+Z)e_i\}_{i\in\In}$ 
then $\cG$ is a finite sequence in $\cV$ such that $T_\cG=T_{\cF^\#_\cV}+Z$ so that $$
T_\cG\,T_\cF^*=T_{\cF^\#_\cV}\,T_\cF^*=P_{\cV//\cW^\perp} \ . 
$$
Hence $\cG\in \cD_\cV(\cF)$ and $S_\cG= S_{\cF^\#_\cV}+ZZ^*=S_{\cF^\#_\cV}+B$, since $ZT_{\cF^\#_\cV}^*=0$.
\end{proof}

\begin{rem}\label{carac espect U}
Let $A_0\in \matpos$ and consider an integer $m<d$. Define 
\beq\label{def U_0}
U(A_0\coma m)\igdef \{A_0+C:\ C\in \matpos\, , \ \rk \,C\leq d-m \ \}\ .
\eeq
We point out that the spectral structure of the set $U(A_0\coma m)$ is described in \cite{MRS13}. Indeed, given $\mu\in(\R^d)^\downarrow$ then there exists $A=A_0+C\in U(A_0\coma m)$ such that $\la(A)=\mu$ (i.e. the eigenvalues of $A$, counting multiplicities and arranged in non-increasing order, coincide with the entries of $\mu$) if and only if
\begin{enumerate}
\item $\mu_i\geq \lambda_i(A_0)$ for $i\in \IN{d}\,$, in case $m\leq 0$;
\item $\mu_i\geq \lambda_i(A_0)$ for $i\in \IN{d}$
and $\mu_{d-m+i}\leq \lambda_i(A_0)$ for $i\in \IN{m}\,$, in case $m\geq 1$.
\EOE
\end{enumerate}
\end{rem}

\pausa
Recall from Eq. \eqref{barraM} that $\M\subseteq \N$ stands for $\M=\IN{p}$ or $\M=\N$ in such a way that $\dim\hil=|\M|$. 
Henceforth, $\ell^1_+(\M)^\downarrow$ denotes the space of sequences $\lambda=(\lambda_i)_{i\in\M}$ 
with $\lambda_i\geq \lambda_j\geq 0$ for $i,\,j\in\M$ such that $i\leq j$ and
$\tr(\lambda)\igdef \sum_{i\in\M}\lambda_i<\infty$. 

\pausa 
 Let $\cF=\{f_i\}_{i\in\In}\in\hil^n$ be a finite sequence with frame operator $S_\cF\in B(\hil)^+$. Hence, $S_\cF$ is a positive semidefinite finite rank operator, with range $\cW=$ Span$\{f_i:\ i\in\In\}\subset \hil$. Let $d=\dim\cW$, and let 
 $(S_\cF)_\cW\in B(\cW)^+$ be the compression of $S_\cF$ to $\cW$ (see Remark \ref{algo sobre compresion}); Then, we define 
 $$\la(S_\cF)=( (\la_i((S_\cF)_\cW))_{i\in\IN{d}},0_{|\M|-d})\in \ell^1_+(\M)\da\ ,$$ where $(\lambda_i((S_\cF)_\cW))_{i\in\IN{d}}\in \R_{>0}^d$ denotes the vector of eigenvalues of the compression $(S_\cF)_\cW\in B(\cW)^+$, counting multiplicities and arranged in non-increasing order. It is straightforward to check that 
 $\la(S_\cF)\in\ell^1_+(\M)\da$ coincides with the vector of singular values (or s-numbers) of the compact operator $S_\cF\in B(\hil)^+$ (see \cite{BSi}).

\begin{teo}[Spectral structure of $\cV$-duals]\label{teo struc espec}
Let $\cV$ and $\cW$ be finite dimensional subspaces of the Hilbert space $\cH$ such that $\cW^\perp\oplus\cV=\cH$ and
let $\dim \cV=\dim \cW=d$. Let $\cF=\{f_i\}_{i\in \In}$ be a frame for $\cW$ and denote $\lambda(S_{\cF^\#_\cV})=\lambda_{\cV}^\#=(\lambda_{\cV,\,j}^\#)_{j\in\M}$ and $m=2d-n$. Given $\mu=(\mu_i)_{i\in\M}\in 
\ell^1_+(\M)^\downarrow$,
the following conditions are equivalent:
\ben
\item There exists $\cG\in\cD_\cV(\cF)$ such that $\lambda(S_\cG)=\mu$;
\item $\mu_i=0$ for $i\geq d+1$ and:
\ben
\item $\mu_i\geqp \lambda_{\cV,\, i}^\#$ for $i\in \IN{d}\,$, in case $m\leq 0$;
\item $\mu_i\geqp \lambda_{\cV,\, i}^\#$ for $i\in \IN{d}$
and $\mu_{d-m+i}\le \la_{\cV,\, i}^\#$ for $i\in \IN{m}$, in case $m\geq 1$.
\een
\een
\end{teo}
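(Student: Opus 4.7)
My plan is to combine Proposition \ref{pro tec}, which parametrizes $\cS\cD_\cV(\cF)$ as $S_{\cF^\#_\cV}+B$ for a suitable family of positive operators $B$, with the spectral characterization of additive perturbations recorded in Remark \ref{carac espect U}. The point is that once one passes to the finite-dimensional space $\cV$, the statement is essentially a translation of that Remark into the frame-theoretic language.

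First, I would note that by Proposition \ref{pro tec}, every $S_\cG$ with $\cG\in\cD_\cV(\cF)$ can be written as $S_{\cF^\#_\cV}+B$, where $B\in B(\hil)^+$ satisfies $R(B)\subseteq\cV$ and $\rk B\leq n-d$. Since $\cF^\#_\cV$ is the canonical $\cV$-dual and hence a frame for $\cV$, the range of $S_{\cF^\#_\cV}$ equals $\cV$; since $B$ is positive and $R(B)\subseteq\cV$, it vanishes on $\cV^\perp$. Thus every element of $\cS\cD_\cV(\cF)$ is supported on $\cV$, which is a $d$-dimensional reductive subspace. This immediately forces $\mu_i=0$ for $i\geq d+1$, and guarantees that the non-zero eigenvalues of $S_\cG$ coincide with the eigenvalues of its compression $(S_\cG)_\cV\in B(\cV)^+$.

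Next, I would identify $\cV\cong\C^d$ via an orthonormal basis, so that the compression map sends $S_{\cF^\#_\cV}\mapsto A_0\igdef(S_{\cF^\#_\cV})_\cV\in\matpos$ and $B\mapsto C\igdef B|_\cV\in\matpos$ with $\rk C=\rk B\leq n-d$. Conversely any $C\in\matpos$ of rank at most $n-d$ extends (by zero on $\cV^\perp$) to an operator $B\in B(\hil)^+$ with $R(B)\subseteq\cV$, so the compression yields a bijection between $\cS\cD_\cV(\cF)$ and the set
\[
U(A_0,m)=\{A_0+C:\ C\in\matpos,\ \rk C\leq d-m\},
\]
with $m=2d-n$ (since $d-m=n-d$). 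Under this identification the vector $(\lambda_i(A_0))_{i\in\IN{d}}$ is exactly $(\lambda^\#_{\cV,i})_{i\in\IN{d}}$, the nontrivial part of $\lambda_\cV^\#$.

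Finally, I would apply Remark \ref{carac espect U} to $A_0\in\matpos$ with the parameter $m=2d-n$: the characterization recorded there asserts precisely that $\mu\in(\R^d)^\downarrow$ is the eigenvalue list of some $A\in U(A_0,m)$ if and only if the inequalities in item (a) hold when $m\leq 0$, and the inequalities in item (b) hold when $m\geq 1$. Combined with the automatic vanishing of $\mu_i$ for $i\geq d+1$ noted above, this gives the equivalence in the theorem. I do not expect any real obstacle in the argument; the only subtlety is being careful that the bijection between $\cS\cD_\cV(\cF)$ and $U(A_0,m)$ preserves spectra on the nontrivial part, which follows from the fact that both $S_{\cF^\#_\cV}$ and $B$ vanish on $\cV^\perp$ and so does their sum.
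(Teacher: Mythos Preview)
Your proposal is correct and follows essentially the same route as the paper: reduce via Proposition \ref{pro tec} to compressions on $\cV\cong\C^d$, identify the resulting set with $U(A_0,m)$ for $m=2d-n$, and invoke Remark \ref{carac espect U}. The paper's argument is identical in structure, only phrasing the identification via matrix entries $A_0=(\langle S_{\cF^\#_\cV} v_j, v_i\rangle)_{i,j}$ with respect to a fixed ONB of $\cV$ rather than as an abstract compression.
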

\begin{proof}

Fix an ONB $\{v_i\}_{i\in\IN{d}}$ of $\cV$. Let $A_0\in\matpos$ be given by 
$A_0=(\langle S_{\cF^\#_\cV} v_j\coma v_i\rangle)_{i,\,j\in\IN{d}}$ and let 
$m=2d-n$ (so that $d-m=n-d$). Then 
$\la(S_{\cF^\#_\cV})
=(\lambda(A_0)\coma 0_{|\M|-d})\in(\ell_+^1(\M))\da$.
Using Proposition \ref{pro tec}, to each $S_\cG= S_{\F^\#_\cV}+B \in \cS\cD_\cV(\cF)$ 
we can associate the element $A_0+C\in U(A_0\coma m)$ where $C=(\, \langle B v_j\coma v_i\rangle\,)_{i,\,j\in\IN{d}}\in\matpos$ 
in such a way that 
$$
\lambda(S_\cG)=(\lambda(A_0+C)\coma 0_{|\M|-d})\in \ell_+^1(\M)\da\ . 
$$
Conversely, if $A_0+C\in U(A_0\coma m)$ then there exists $\cG\in \cD_\cV(\cF)$ such that the 
matrix corresponding to $S_\cG$ as above is $A_0+C$. Thus, the previous remarks show that
\beq \label{ident conj6}
\{\la(S_\cG):\ \cG\in \cD_\cV(\cF)\}=\{(\la(A)\coma 0_{|\M|-d}): \ A\in U(A_0\coma m)\}\ .
\eeq
The proof now follows from Eq. \eqref{ident conj6} above and Remark \ref{carac espect U}.
\end{proof}

\begin{rem}
Using Theorem \ref{teo struc espec}, in case $\cV=\cW$ (i.e. classical duality) we recover the structure of classical duals of a frame $\cF$ for the Hilbert space $\cW$ as described in \cite{MRS13}.\EOE
\end{rem}

\begin{cor}\label{hay nu-dual parseval}
There exists $\cG\in\cD_\cV(\cF)$ which is Parseval in $\cV$ if and only if:
\ben
\item[(a)] $1 \geq \la_{\cV,\, i}^\#$ for $i\in \IN{d}\,$, in case $m=2d-n\leq 0$;
\item[(b)] $1 \geq \la_{\cV,\, i}^\#$ for $i\in \IN{d}\,$ and $1 = \la_{\cV,\, i}^\#$ for $i\in \IN{m}$,
in case $d-1\geq m=2d-n\geq 1$.
\een
\end{cor}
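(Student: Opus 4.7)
The plan is to apply Theorem \ref{teo struc espec} to the specific vector $\mu\in\ell^1_+(\M)\da$ that corresponds to a Parseval frame in $\cV$, and then simplify the resulting conditions.

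First, I would observe that $\cG\in\cD_\cV(\cF)$ is Parseval in $\cV$ if and only if $S_\cG|_{\cV}=I_\cV$ (here $S_\cG\in B(\hil)^+$ is a positive finite rank operator whose range lies in $\cV$, with $\cV^\perp\subseteq \ker S_\cG$). Thus $\cG$ is Parseval in $\cV$ exactly when the nonzero eigenvalues of $S_\cG$ are all equal to $1$ with multiplicity $d$; equivalently, $\la(S_\cG)=\mu$, where $\mu\in\ell^1_+(\M)\da$ is defined by $\mu_i=1$ for $i\in\IN{d}$ and $\mu_i=0$ for $i\geq d+1$.

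Next, I would substitute this particular $\mu$ into the characterization given by Theorem \ref{teo struc espec}. The condition $\mu_i=0$ for $i\geq d+1$ is automatic. In the case $m=2d-n\leq 0$, only item (a) of Theorem \ref{teo struc espec} needs to be checked, and it reads $1=\mu_i\geqp \la_{\cV,\,i}^\#$ for $i\in\IN{d}$, which is exactly condition (a) of the corollary. In the case $d-1\geq m\geq 1$, the first inequality again gives $1\geqp \la_{\cV,\,i}^\#$ for $i\in\IN{d}$. The second inequality requires $\mu_{d-m+i}\leqp \la_{\cV,\,i}^\#$ for $i\in\IN{m}$; since $d-m+i\leq d$, we have $\mu_{d-m+i}=1$, so this becomes $1\leqp \la_{\cV,\,i}^\#$ for $i\in\IN{m}$. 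Combining with the first inequality restricted to $i\in\IN{m}$ forces $\la_{\cV,\,i}^\#=1$ for $i\in\IN{m}$, which is exactly condition (b).

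The only real obstacle is the bookkeeping: correctly identifying that $\mu_{d-m+i}=1$ in the second family of inequalities (because the index stays in the range $\{1,\ldots,d\}$ when $1\leq i\leq m$) and then observing that the two sided inequality collapses to an equality on the top $m$ eigenvalues. Once this is noted, the equivalence is immediate from Theorem \ref{teo struc espec}.
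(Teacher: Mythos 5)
Your proof is correct and follows the same route as the paper: identify Parsevalness in $\cV$ with $\lambda(S_\cG)=(\uno_d,0,\dots)$ and then read off the conditions from Theorem \ref{teo struc espec}. The paper leaves the bookkeeping implicit; you spell it out, but the argument is identical.
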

\begin{proof}
Let $\cG$ be a frame for $\cV$. Notice that $\cG$ is a Parseval in $\cV$, i.e. $S_\cG=P_\cV\,$, 
if and only if $\lambda_i(S_\cG)=1$ for every $ i\in\IN{d}\,$. Thus, the result now follows from Theorem \ref{teo struc espec}.
\end{proof}

\begin{rem}\label{rel dhan}
With the notations and terminology from Theorem \ref{teo struc espec}, notice that Corollary \ref{hay nu-dual parseval} can be written as follows: there exists $\cG\in\cD_\cV(\cF)$ which is Parseval in $\cV$ if and only if
$$ S_{\cF^\#_\cV}\leq P_\cV \peso{and} \dim R(P_\cV-S_{\cF^\#_\cV})\leq d-m=n-d= \dim\ker T_\cF \ .$$  This last formulation of the existence of Parseval $\cV$-duals formally resembles the characterization in 
\cite[Proposition 2.4]{dhan} in case of classical duality. \EOE
\end{rem}

\subsection{Optimal oblique duals with norm restrictions}

In applied situations, it is desired to characterize the existence (and find explicit methods of construction) of frames with some prescribed parameters. This kind of problems are referred to as {\it frame design problems}, and they are at the core of finite frame theory (see for example \cite{BF,CKFT,FMP,MR10,MRS13,MRS14,Pot} and the recent book \cite{FF}). 

\pausa 
Let $\cW,\,\cV\subset \hil$ be two finite dimensional subspaces such that  $\cW \oplus\cV\orto=\hil$, and let $\dim\cW=\dim\cV=d$. Given a fixed frame $\cF=\{f_i\}_{i\in \In}\in\cW^n$ for $\cW$
we can ask whether there exists $\cG\in \cD_\cV(\cF)$ with some prescribed parameters; and in case such a dual exists we would like to obtain a procedure to construct it. For example, given $\mu\in (\R_{\geq 0}^d)^\downarrow$ we can ask whether there exists $\cG\in \cD_\cV(\cF)$ with $\lambda(S_\cG)=\mu$.
Notice that Theorem \ref{teo struc espec} above completely solves this problem; moreover, the proof of Proposition \ref{pro tec} contains a procedure to effectively construct such a dual.

\pausa
As a consequence of the description  of the spectra of elements in $\cS(\cD_\cV(\cF))$, we see that if $\cG\in \cD_\cV(\cF)$ then $S_\cG\geq S_{\cF_\cV^\#}\,$. This last fact implies that the canonical $\cV$-dual is optimal with respect to several criteria (including convex potentials). Yet, from a numerical point of view the oblique canonical $\cV$-dual might not be the best choice of a $\cV$-dual for $\cF$. For example, the condition number of the frame operator $S_{\cF^\#_\cV}$ may not be minimal in $\cD_\cV(\cF)$; indeed, Corollary \ref{hay nu-dual parseval} shows that under certain assumptions we can consider a Parseval $\cV$-dual of $\cF$ (with minimal condition number).

\pausa
In order to search for alternate $\cV$-duals that are numerically  robust, we proceed as follows: for $t\geq \tr(S_{\cF^\#_\cV})$ we consider
$$
\cD_{\cV, \, t}(\cF)\igdef\big\{\,\cG=\{g_i\}_{i\in\In}\in \cD_\cV(\cF): \ \sum_{i\in\In}\|g_i\|^2\geq t \,\big\}\ .  
$$
Notice that if $t>\tr(S_{\cF^\#_\cV})$ then the canonical $\cV$-dual is not in
$\cD_{\cV, \,t}(\cF)$ and therefore, it is natural to ask whether there is an optimal dual in $\cD_{\cV,\,t}(\cF)$. Using the well known identity 
\beq\label{eq ident tra norm}
 \sum_{i\in\In} \|g_i\|^2= \tr(S_\cG)= \sum_{i\in\IN{d}}\mu_i  \ ,
 \eeq
  where $\lambda(S_\cG)=\mu$, we see that Theorem \ref{teo struc espec} gives a complete solution to a frame design problem in the sense that it allows to get a complete description of the eigenvalue lists of the frame operators of elements in $\cD_{\cV, \,t}(\cF)$.

\def\nulamt{\nu_{\la \coma m}(t)}
\def\numumt{\nu_{\mu \coma m}(t)}
\begin{rem}\label{rem estruc espec Ut}
Let $A_0\in \matpos$, $t\geq \tr(A_0)\geq 0$ and consider an integer $m<d$. Define 
$$
U_t(A_0\coma m) \igdef \{A_0+C:\ C\in \matpos\, , \ \rk(C)\leq d-m\, , \ \tr(A_0+C)\geq t\}\ .
$$
The spectral and geometrical structure of the set $U_t(A_0\coma m)$ is described 
in \cite{MRS13}; in particular, there it is shown that there exist $\prec_w$-minimizers within this set. 
Indeed, using the previous notations, if $\lambda(A_0)=\la=(\lambda_i)_{i\in \IN{d}}\in(\R_{\geq 0}^d)\da$, we consider $h_m:[\lambda_d\coma \infty)\rightarrow \R_{\geq 0}$ given by 
$$h_m(t)=\sum_{i=\max\{m\,,\,0\}+1}^d (t-\lambda_i)^+\ , $$
 where $x^+$ stands for the positive part of $x$. Notice that $h_m$ is strictly increasing; hence there exists a unique $c_{\la,\,m}(t)=c  \geq \la_d$ such that $h_m(c)=t-\tr \la$. Then, set
\ben
\item $\nulamt\igdef \big(\,(c-\la_1)^+ +\la_1\coma \ldots\coma 
(c-\la_d)^+ +\la_d\,\big) \in (\R^d_{\geq 0})\da$, if $m\leq 0$;
\item $\nulamt \igdef(\la_1\coma \ldots \coma \la_m \coma (c-\la_{m+1})^++\la_{m+1}\coma \ldots\coma (c-\la_{d})^+ +\la_{d})
\in \R_{\geq 0}^d\,$, if $m\in\IN{d-1}\,$. 
 \een
Notice that if $t>\tr(A_0)$ then $ \nulamt \in \R_{>0}^d\,$. Then, it turns out that (see \cite{MRS13}) 
\ben
\item There exists $A^{\rm op}\in U_t(A_0 \coma m)$ such that $\la(A^{\rm op})=\nulamt \da$;
\item For every $A\in U_t(A_0\coma m)$ then $\nulamt \prec_w\la(A)$;
\item If $A=A_0+B\in U_t(A_0\coma m)$ then $\la(A)=\nulamt $ if and only if  
$\nulamt = \la (A_0) + \la\ua(B)$ and 
there exists an ONB 
$\{z_i\}_{i\in\IN{d}}$ of $\C^d$ such that 
\beq
A_0=\sum_{i\in \IN{d}} \la_i\ z_i\otimes z_i \peso{ and } B=\sum_{i\in \IN{d}} \la_{d-i+1}(B)\ z_i\otimes z_i \ . \EOEP
\eeq
\een
\end{rem}

\pausa
The following result shows that there are structural minimizers of arbitrary (strictly) convex potentials
in $\cD_{\cV, \,t}(\cF)$, i.e. duals $\cG\in\cD_{\cV, \,t}(\cF)$ that simultaneously minimize every convex potential. This is interesting from an applied point of view, since evaluations of convex potentials (e.g. the frame potential as described in Eq. \eqref{rep BFpot}) are typically easier to compute than structural parameters (i.e. computing eigenvalue lists or eigenvectors)

\begin{teo}[Optimal duals in $\cD_{\cV, \,t}(\cF)$]\label{sobre nuduales optimos}
Let $\cV$ and $\cW$ be finite dimensional subspaces of the Hilbert space $\cH$ such that $\cW^\perp\oplus\cV=\cH$ and
let $d=\dim \cV=\dim \cW$. Let $\cF=\{f_i\}_{i\in\In}$ be a frame for $\cW$ and set $\lambda_{\cV}^\#
\igdef \lambda(S_{\cF^\#_\cV})$. For every  $t\geq \tr \lambda_{\cV}^\#$ 
there exists $\nu\in\ell^1_+(\M)^\downarrow$ %such that
with the following minimality properties: 
\ben
\item There exist $\cG_{\rm op}\in \cD_{\cV,\,t}(\cF)$ such that $\lambda(S_{\cG_{\rm op}})=\nu$;
\item For every non-decreasing function $h\in\convf$ then
\begin{equation}\label{inequa op}P_h(\cG_{\rm op})\leq P_h(\cG) \ \ \ , \ \ \cG=\{g_i\}_{i\in\In}\in\cD_{\cV,\,t}(\cF)\ .\end{equation}
\een
Moreover, if we assume further that $h\in\convfs$ and $\cG=\{g_i\}_{i\in\In}\in\cD_{\cV,\,t}(\cF)$ attains equality in \eqref{inequa op}, then $\lambda(S_\cG)=\nu$ and there exists an ONB $\{x_i\}_{i\in\IN{d}}$ for $\cV$ 
such that $$
S_{\cF^\#_\cV}=\sum_{i\in\IN{d}} \la_{\cV,\,i}^\#\ x_i\otimes x_i \peso{and}
B=S_\cG-S_{\cF^\#_\cV}=\sum_{i\in\IN{d}} \lambda_{d-i+1}(B) \ x_i\otimes x_i\ . $$
\end{teo}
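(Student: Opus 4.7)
The plan is to reduce the statement to the matricial problem described in Remark \ref{rem estruc espec Ut} via the parametrization of Proposition \ref{pro tec}, and then translate the conclusions back to the frame setting. The ingredients needed are: Proposition \ref{pro tec} (which identifies $\cS\cD_\cV(\cF)$ with a translated set of bounded-rank positive operators on $\cV$), the trace identity \eqref{eq ident tra norm} (which converts the constraint $\sum\|g_i\|^2\ge t$ into a trace constraint), Remark \ref{rem estruc espec Ut} (which completely describes the $\prec_w$-minimum of $U_t(A_0,m)$ together with its attainment cases), and Items 2 and 3 of the convex-function list in Section \ref{sec 2.2.}.

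First I would fix an ONB $\{v_i\}_{i\in\IN{d}}$ of $\cV$ and let $A_0\in\matpos$ be the matrix of $(S_{\cF^\#_\cV})|_\cV$ in that basis, so that $\lambda(A_0)=(\lambda^\#_{\cV,\,i})_{i\in\IN{d}}$. By Proposition \ref{pro tec}, each $\cG\in\cD_\cV(\cF)$ satisfies $S_\cG=S_{\cF^\#_\cV}+B$ with $B\in\posop$, $R(B)\subseteq\cV$ and $\rk B\le n-d$; writing $B$ in the fixed basis yields $C\in\matpos$ with $\rk C\le d-m$, where $m=2d-n$. By \eqref{eq ident tra norm}, the condition $\cG\in\cD_{\cV,t}(\cF)$ becomes $\tr(A_0+C)\ge t$, so the assignment $\cG\mapsto A_0+C$ is a spectrum-preserving surjection (up to padding with zeros) from $\cD_{\cV,t}(\cF)$ onto $U_t(A_0,m)$. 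Setting $\nu\igdef (\nu_{\lambda_\cV^\#,\,m}(t)\da,\, 0_{|\M|-d})\in\ell^1_+(\M)\da$, Item 1 of Remark \ref{rem estruc espec Ut} produces $A^{\mathrm{op}}\in U_t(A_0,m)$ with $\lambda(A^{\mathrm{op}})=\nu_{\lambda_\cV^\#,\,m}(t)\da$, and lifting it through the surjection gives $\cG_{\mathrm{op}}\in\cD_{\cV,t}(\cF)$ with $\lambda(S_{\cG_{\mathrm{op}}})=\nu$, which proves (1).

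For (2), given any $\cG\in\cD_{\cV,t}(\cF)$, Item 2 of Remark \ref{rem estruc espec Ut} gives $\nu_{\lambda_\cV^\#,\,m}(t)\prec_w\lambda(A)$ for the associated $A=A_0+C$, which after padding with zeros reads $\nu\prec_w\lambda(S_\cG)$. Since every element of $\cD_\cV(\cF)$ is a frame for $\cV$, both $S_\cG$ and $S_{\cG_{\mathrm{op}}}$ have exactly $d$ positive eigenvalues, so Definition \ref{pot generales} reduces $P_h(\cdot)$ to the sum of $h$ evaluated at these positive eigenvalues; the convex-function implication recalled as Item 2 of the list in Section \ref{sec 2.2.} (for non-decreasing $h\in\convf$ and $\prec_w$) then yields \eqref{inequa op}.

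For the equality case, assuming $h\in\convfs$ is non-decreasing and equality holds in \eqref{inequa op}, Item 3 of the same list forces the non-increasing rearrangements of $\lambda(S_\cG)$ and $\nu$ to coincide, so $\lambda(S_\cG)=\nu$. Transporting back, the associated $A=A_0+C\in U_t(A_0,m)$ satisfies $\lambda(A)=\nu_{\lambda_\cV^\#,\,m}(t)\da$, so Item 3 of Remark \ref{rem estruc espec Ut} furnishes an ONB $\{z_i\}_{i\in\IN{d}}$ of $\C^d$ that simultaneously diagonalizes $A_0$ and $C$ with the reverse-ordered coefficients indicated there. The vectors $x_i=\sum_{j\in\IN{d}}(z_i)_j\,v_j$ then form the desired ONB of $\cV$ diagonalizing $S_{\cF^\#_\cV}$ and $B=S_\cG-S_{\cF^\#_\cV}$. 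The main technical point is the equality case, which combines the strict-convexity clause for submajorization with the matricial rigidity statement in Remark \ref{rem estruc espec Ut}; the remaining steps are a straightforward bookkeeping exercise via Proposition \ref{pro tec} and the trace identity.
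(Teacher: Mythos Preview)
Your proof is correct and follows essentially the same route as the paper's: reduce to the matricial set $U_t(A_0,m)$ via Proposition \ref{pro tec} and Eq.~\eqref{eq ident tra norm}, invoke the $\prec_w$-minimizer $\nu_{\la,m}(t)$ from Remark \ref{rem estruc espec Ut}, and then use the submajorization/convex-function implications of Section \ref{sec 2.2.}. You are in fact more explicit than the paper in spelling out the equality case (combining Item 3 of the convexity list with Item 3 of Remark \ref{rem estruc espec Ut} and transporting the simultaneous diagonalization back to $\cV$), which the paper leaves implicit.
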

\begin{proof} 
Fix $\{v_i\}_{i\in\IN{d}}$ an ONB of $\cV$ and let $A_0\in\matpos$ be given by 
$A_0=\big(\,\langle S_{\cF^\#_\cV}\, v_j \coma v_i\rangle\,\big)_{i,\,j\in\IN{d}}\,$. 
Arguing as in the proof of Theorem \ref{teo struc espec}, and taking into account the identity in Eq. \eqref{eq ident tra norm} we see that
\beq \label{ident conj7}
\{\la(S_\cG):\ \cG\in \cD_{\cV,\,t}(\cF)\}=\{(\la(A)\coma 0_{|\M|-d}): \ A\in U_t(A_0\coma m)\}\ .
\eeq
Set $\la=\la(A_0)=(\la^\#_{\cV,\,i})_{i\in\IN{d}}\in \R_{>0}^d$, and notice that $t\geq \tr\la$; set $m=2d-n$ and consider 
$\nu_{\la,\,m}(t)\in \R_{>0}^d$ defined as in Remark \ref{rem estruc espec Ut}. Finally, define $$\nu=(\nu_{\la,\,m}(t),0_{|\M|-d})\in \ell^1_+(\M)\ . $$
If $h\in\convf$ is a non-decreasing function then, by Definition \ref{pot generales}, we get that
\beq\label{ec defi poth}
P_h(\cG)=\sum_{i\in\IN{d}}h(\la_i(S_\cG))=\tr((S_\cG)_\cV) \peso{for} \cG\in\cD_\cV(\cF)\ .
\eeq 
Hence, the proof now follows from Eqs. \eqref{ident conj7} and \eqref{ec defi poth} above, Remark \ref{rem estruc espec Ut} and the relation between submajorization and non-decreasing convex functions described in Section \ref{sec 2.2.}.
\end{proof}

\section{Optimal $(\cV,\cW)$-oblique dual pairs with prescribed
parameters}\label{sec optimal rot1}

It has long been recognized that for a fixed frame $\cF$ for $\cW$, oblique $\cV$-duality offers a much more flexible theory than classical duality, which comes from the fact that we can choose $\cV$ from a large class of subspaces (see \cite{CE06} for example). Moreover, it has also been noticed that the relative position of the subspaces $\cV$ and $\cW$ for which $\cW\oplus \cV^\perp=\hil$ plays a key role when comparing oblique duality to classical duality. This phenomenon has been studied mainly though the angle between the subspaces $\cV$ and $\cW$ (see \cite{YEldar3,CE06,YEldar1} and the definitions below).
Yet, the angle between the subspaces $\cV$ and $\cW$ only provides qualitative measure of the role of the relative geometry of $\cV$ and $\cW$ in the context of $\cV$-duality. In what follows, we give a detailed description of the role of the relative position of $\cV$ and $\cW$ in the $\cV$-duality of $\cF$ in case the subspaces are finite dimensional. Our analysis relies on a multiplicative Lidskii's inequality and it is based on the complete list of the so-called principal angles between $\cV$ and $\cW$. Our results provide sharp quantitative measures of these relations.

\subsection{Relative geometry between finite dimensional subspaces}

We begin by describing the principal angles and vectors between finite dimensional subspaces. 
Let $\cV,\,\cW\inc\hil$ be finite dimensional subspaces with
$\dim\cV=\dim\cW=d$. 
Let  $P_\cV$ and $P_\cW$ denote the orthogonal projections onto $\cV$ and $\cW$ respectively. The principal angles 
$$
0\leq \theta_1\leq \ldots\leq \theta_d\leq \frac{\pi}{2} 
$$
are defined  (see \cite{GoLo,KA}) in such a way that the 
positive finite rank operator $|P_\cW\, P_\cV|\in \op$ satisfies that 
$$ 
\lambda(|P_\cW\, P_\cV|)=(\cos(\theta_1)\coma \ldots\coma \cos(\theta_d)\coma 0_{|\M|-d})\in \ell^1_+(\M)^\downarrow\  . 
$$
We say that $w_1\coma \ldots\coma w_d\in \cW$ (respectively $v_1\coma \ldots\coma v_d\in \cV$) are principal vectors (or principal directions) between $\cV$ and $\cW$ if they are an o.n. basis of $\cW$ (respectively if they are an o.n. basis of $\cV$) such that 	
\beq\label{cosenos} 
| P_\cV\,P_\cW |\, w_i=\cos(\theta_i)\,w_i \ \ \ (\ \  \text{respectively}   |P_\cW\,P_\cV|\, v_i=\cos(\theta_i)\,v_i \ \  ) 
\peso{for every} i \in \IN{d} 
\ . 
\eeq
An alternative characterization of principal angles and
vectors is as follows: given $k\in \IN{d}\,$, then define inductively
$$
\langle v_k \coma w_k\rangle 
=\cos(\theta_k)=\max_{v\in \cV} \ \max_{w\in \cW} \ \ \langle v\coma  w\rangle  
$$ 
subject to the restrictions 
$$ 
\|v\|=\|w\|=1 \ \ , \ \  \langle v,\,v_i\rangle =0 \py  \langle w\coma w_i\rangle =0 \peso{for} \ 0\leq i\leq k-1\ , 
$$ 
where we set $v_0=w_0=0$. Notice that the principal angles between $\cV$ and $\cW$ are a qualitative measure of the relative position between these subspaces.

\pausa
Assume further that $\cW^\perp  \oplus\cV=\hil$. Consider the oblique projection $P_{\cV//\cW^\perp}\,$. 
In this case, there exists a connection between the principal angles and vectors between $\cV$ and 
$\cW$ and the geometrical and spectral structure of $P_{\cV//\cW^\perp}\,$. Indeed, it is known 
(see \cite{CoMa10} and the references therein) that the Moore-Penrose pseudo-inverse of $P_{\cV//\cW^\perp}$ 
is given by 
\begin{equation}\label{ec proy ob}
(P_{\cV//\cW^\perp}) ^\dagger=P_\cW\,P_\cV \implies|P_{\cV//\cW^\perp}|^\dagger =|P_\cV\,P_\cW| \ \text{ and } \ |(P_{\cW//\cV^\perp})^*|^\dagger =|P_\cW\,P_\cV | \ . \end{equation}
In this case, since $P_{\cV//\cW^\perp}$ has rank $d$, then $|P_\cV\,P_\cW |$ also has rank $d$ and therefore $\theta_d<\pi/2$.
Moreover, by Eq. \eqref{cosenos}, the principal vectors between $\cV$ and $\cW$ 
satisfy that 
\begin{equation}\label{eq ang y vec} 
|P_{\cV//\cW^\perp}|\, w_i=\frac{1}{\cos(\theta_i)}\,w_i \py |P_{\cW//\cV^\perp} |\, v_i=\frac{1}{\cos(\theta_i)}\,v_i 
\peso{for every} i \in \IN{d} 
\ . \end{equation}
Take the polar decomposition $ P_\cV\,P_\cW  = U\, | P_\cV\,P_\cW |$
with the unique  partial isometry $U\in \op$ with initial space $\cW$ and final 
space $\cV$. Hence we have that $| P_\cW\,P_\cV | = U \, | P_\cV\,P_\cW |\, U^*$. 
Therefore, given principal vectors $\{w_i\}_{i\in\IN{d}}\in \cW^d$ between $\cV$ and $\cW$,  
in what follows 
we shall assume that the corresponding
 principal vectors $\{v_i\}_{i\in\IN{d}}\in\cV^d$ between $\cV$ and $\cW$ are given by $v_i = U\, w_i$ for 
every $i\in\IN{d}\,$. In particular, it holds that 
\beq\label{sin primas}
P_\cW \, v_i = \cos (\theta_i) \, w_i 
\py P_\cV \, w_i = \cos (\theta_i) \, v_i 
\peso{for every} i \in \IN{d}\ ,
\eeq
because, for example, 
$P_\cV \, w_i = P_\cV\,P_\cW \,w_i = 
U\, | P_\cV\,P_\cW | \, w_i = \cos (\theta_i) \ U \, w_i 
=\cos (\theta_i) \  v_i  \ . 
$

\begin{rem}[On two notions of angle between subspaces] \label{rem comp angulos} There are two different notions
of angle between subspaces in the literature. Next we include their definitions, we compare them and we also relate them to the principal angles defined above. Hence, consider
two finite dimensional subspaces $\cV,\,\cW\inc\hil$  with
$\dim\cV=\dim\cW=d$. Let $(\theta_j)_{j\in\IN{d}}$ denote the principal angles between $\cV$ and $\cW$. 
\ben
\item  In \cite{UnAl} the authors introduce the angle $\theta_{\cV,\,\cW}\in[0,\pi/2]$ between 
$\cV$ and $\cW$ defined by 
$$
\cos(\theta_{\cV,\,\cW}) = \inf_{f\in\cW \coma \|f\|=1} \|P_\cV f\|
\ .
$$
Therefore, 
\beq\label{defi ang UA}
\cos(\theta_{\cV,\,\cW})^2 =\inf_{f\in\cW,\,\|f\|=1} \langle |P_\cV\,P_\cW|^2f\coma f\rangle=\cos(\theta_d)^2\ . 
\eeq
That is, we have the identity $\theta_{\cV,\,\cW}=\theta_d\,$. 
If we assume further that $\cW^\perp  \oplus\cV=\hil$ then Eqs. \eqref{eq ang y vec} and \eqref{defi ang UA} provide a simple proof of the identity $\|P_{\cV//\cW^\perp}\|= \cos(\theta_{\cV,\,\cW})^{-1}$. 

\item There is yet another notion of angle between subspaces, the so-called Dixmier angle, denoted by $\theta^{\cV,\,\cW}\in[0,\pi/2]$ and given by
$$%\beq\label{def ang Dixmier}
 \cos(\theta^{\cV,\,\cW})=\sup_{v\in\cV,\, w\in\cW,\,\|v\|=\|w\|=1} |\langle v,\,w\rangle| =\|P_\cV\, P_\cW\|= \cos(\theta_1) \ . 
$$ %\eeq
That is, we have the identity $\theta^{\cV,\,\cW}=\theta_1\,$. If we assume further that $\cW^\perp  \oplus\cV=\hil$ then it is well known (see \cite{Deu}) that 
$\|P_{\cV//\cW^\perp}\|=\sin(\theta^{\cV,\,\cW^\perp})^{-1}$ which implies that $\sin(\theta^{\cV,\,\cW^\perp})=\cos(\theta_{\cV,\,\cW})$
and hence we get that 
\begin{equation*}
\theta_{\cV,\,\cW}=\pi/2-\theta^{\cV,\,\cW^\perp}\ .  \EOEP 
\end{equation*}
\een
\end{rem}

\subsection{Optimal dual pairs by rigid rotations}

We begin by fixing the following notations:

\begin{nota}\label{notas}
Throughout the rest of the paper we shall consider 
\ben
\item $\cV,\,\cW\subset \hil$ two finite dimensional subspaces such that $\cV\oplus\cW\orto=\hil$;
\item $\angle(\cV;\cW)=(\theta_j)_{j\in \IN{d}}\in ([0,\pi/2)^d)^\uparrow$ principal angles ($\dim\cV=\dim\cW=d$); 
\item $\{v_j\}_{j\in \IN{d}}\in\cV^d$ , $\{w_j\}_{j\in \IN{d}}\in\cW^d$ principal vectors
in $\cV$ and $\cW$ obeying Eq. \eqref{sin primas};
\item $\cF=\{f_i\}_{i\in \In}\in\cW^n$ a frame for $\cW$ with 
\beq
\lambda(S_\cF)=\la=(\la_i)_{i\in\M} 
\py \la(S_{\cF^\#_\cV})=\la^\#_\cV=(\la^\#_{\cV,\,i})_{i\in\M}\ .
\EOEP
\eeq
\een
\end{nota}
\noi Consider the Notations \ref{notas}. 
In order to have an estimate of $\lambda^\#_\cV$ notice that
 $$\cF^\#_\cV =P_{\cV//\cW^\perp} \cdot \cF^\# \implies T_{\cF^\#_\cV}=P_{\cV//\cW^\perp} \ S_\cF^\dagger \ T_\cF$$
and hence
\begin{equation}\label{eq estimacion lambda}
 S_{\cF^\#_\cV} = P_{\cV//\cW^\perp} \, S_\cF^\dagger \, (P_{\cV//\cW^\perp})^*=P_{\cV//\cW^\perp} \, S_\cF^\dagger \, P_{\cW//\cV^\perp}\ .
 \end{equation}
The previous remarks, together with Lidskii's multiplicative inequalities in Theorem \ref{teo hay max y min mayo}, allow us to obtain the following bounds in terms of the spectral structure of $S_\cF$ and the principal angles (i.e. the relative geometry) between $\cV$ and $\cW$. We point out that the bounds given in next result are a quantitative measure of how the relative geometry of the subspaces $\cV$ and $\cW$ impact in oblique duality. 

\begin{teo}\label{teo acot lids multi}
Consider the Notations \ref{notas}. 
If we let $\mu= \big( \, \lambda_{d-j+1}^{-1}\,\cos^{-2}(\theta_j)\, \big )_{j\in\IN{d}}^\downarrow$ then
\begin{equation}\label{eq acot lids multi}
\prod_{j\in \IN{k}} \mu_j
\leq \prod_{j\in \IN{k}}\lambda^\#_{\cV,\,j}\leq \left( \prod_{j=d-k+1}^d \lambda_{j}\,\cos^2(\theta_j)\right)^{-1}\ , \ \ k\in \IN{d}\ .
\end{equation}
Moreover, $\la^\#_\cV=\mu$ (resp. 
$\la^\#_\cV =\big(\,\la_{d-j+1}\inv \cos^{-2}(\theta_{d-j+1}) \,\big)_{j\in\IN{d}}\,) 
\iff $
there exist an o.n.b. in $\cW$ of principal vectors $\{w_i\}_{i\in\IN{d}}$ between $\cV$ and $\cW$ such that $$ S_\cF=\sum_{j\in\IN{d}} \la_{d-j+1}(S_\cF)\ w_i\otimes w_i$$ 
(resp.  $S_\cF=\sum_{j\in\IN{d}} \la_{j}(S_\cF)\ w_i\otimes w_i$).
\end{teo}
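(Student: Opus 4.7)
The approach is to recognize that, up to compression, the frame operator $S_{\cF^\#_\cV}$ factors in the form $V S V^*$ with $\lambda(V^*V)$ explicitly given by the principal angles, and then to invoke the multiplicative Lidskii inequality (Theorem \ref{teo hay max y min mayo}).

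\textbf{Step 1 (the factorization).} Starting from \eqref{eq estimacion lambda}, $S_{\cF^\#_\cV}=P_{\cV//\cW^\perp}\,S_\cF^\dagger\,P_{\cW//\cV^\perp}$. Since the right factor has range in $\cW$ and vanishes on $\cV^\perp$, and the left factor sends $\cW$ into $\cV$ and vanishes on $\cW^\perp$, the nonzero eigenvalues of $S_{\cF^\#_\cV}$ coincide with those of its compression to $\cV$. Setting $V=P_{\cV//\cW^\perp}|_\cW:\cW\to\cV$ and $\tilde S=(S_\cF)|_\cW$ (invertible positive on $\cW$), the identity $(P_{\cV//\cW^\perp})^*=P_{\cW//\cV^\perp}$ gives $V^*=P_{\cW//\cV^\perp}|_\cV:\cV\to\cW$ and hence
$$
(S_{\cF^\#_\cV})|_\cV \;=\; V\,\tilde S^{-1}\,V^*\,.
$$
Fixing ONBs for $\cW$ and $\cV$ turns $V$ into a $d\times d$ matrix, so Theorem \ref{teo hay max y min mayo} applies with $S=\tilde S^{-1}$.

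\textbf{Step 2 (spectra).} As an operator on $\cW$, $V^*V=|P_{\cV//\cW^\perp}|^2|_\cW$, and the principal vectors $\{w_i\}_{i\in\IN{d}}$ of Notations \ref{notas} diagonalize it via \eqref{eq ang y vec} with $V^*V\,w_i=\cos^{-2}(\theta_i)\,w_i$. Therefore the decreasing eigenvalue vector is $\beta=(\cos^{-2}\theta_{d-i+1})_{i\in\IN{d}}$ with $\beta^{\uparrow}_i=\cos^{-2}\theta_i$, and $\lambda(\tilde S^{-1})_i=\la_{d-i+1}^{-1}$.

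\textbf{Step 3 (applying Lidskii).} Theorem \ref{teo hay max y min mayo} then gives
$$
\bigl(\la_{d-i+1}^{-1}\,\cos^{-2}\theta_i\bigr)_{i\in\IN{d}} \;\prec_{\log}\; \bigl(\la_{\cV,\,i}^\#\bigr)_{i\in\IN{d}} \;\prec_{\log}\; \bigl(\la_{d-i+1}^{-1}\,\cos^{-2}\theta_{d-i+1}\bigr)_{i\in\IN{d}}\,.
$$
The decreasing rearrangement of the left vector is $\mu$ by definition, yielding the lower bound in \eqref{eq acot lids multi}. On the right, both $\la_j$ and $\cos^2\theta_j$ are non-increasing in $j$, so $\la_j\cos^2\theta_j$ is non-increasing in $j$ and $\la_{d-i+1}^{-1}\cos^{-2}\theta_{d-i+1}$ is non-increasing in $i$; no rearrangement is needed, and the substitution $j=d-i+1$ converts $\prod_{i=1}^{k}$ into $\prod_{j=d-k+1}^{d}$, producing the upper bound.

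\textbf{Step 4 (equality cases).} For the lower-bound equality $\la_\cV^\#=\mu$, the equality clause of Theorem \ref{teo hay max y min mayo} produces an ONB $\{w_i\}_{i\in\IN{d}}$ of $\cW$ that simultaneously diagonalizes $\tilde S^{-1}$ (equivalently $S_\cF=\sum_{i\in\IN{d}} \la_{d-i+1}\,w_i\otimes w_i$) and $|V|$ with eigenvalue $\cos^{-1}\theta_i$; by \eqref{eq ang y vec} the latter identifies $\{w_i\}$ as principal vectors between $\cV$ and $\cW$. The upper-bound equality case is symmetric and, after relabeling $w_i\mapsto w_{d-i+1}$, produces the conjugate form $S_\cF=\sum_{i\in\IN{d}}\la_i\,w_i\otimes w_i$ with $w_i$ associated to $\theta_i$. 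The converses follow by directly substituting these diagonalizations back into the factorization of Step 1.

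The main bookkeeping obstacle is the adjoint identification in Step 1, namely $(P_{\cV//\cW^\perp}|_\cW)^*=P_{\cW//\cV^\perp}|_\cV$ as an operator $\cV\to\cW$, together with keeping straight which operator lives on $\cV$ and which on $\cW$; once this dictionary is set, Theorem \ref{teo hay max y min mayo} does all of the analytic work and both the inequalities and the equality clauses follow mechanically.
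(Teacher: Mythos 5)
Your proof is correct and follows essentially the same route as the paper: reduce via Eq.~\eqref{eq estimacion lambda} to a conjugation $V\tilde S^{-1}V^*$ with $\lambda(V^*V)$ determined by the principal angles through Eq.~\eqref{eq ang y vec}, then invoke the multiplicative Lidskii inequality of Theorem~\ref{teo hay max y min mayo} for both the log-majorization bounds and the equality cases. The only cosmetic difference is that the paper writes the conjugation through the polar decomposition $P_{\cV//\cW^\perp}=V\,|P_{\cV//\cW^\perp}|$ and works with $M_\cW\,(S_\cF^\dagger)_\cW\,M_\cW$ on $\cW$, whereas you compress $S_{\cF^\#_\cV}$ to $\cV$ and take $V=P_{\cV//\cW^\perp}|_\cW$ directly; these are the same computation up to a unitary intertwining.
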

\begin{proof}
Consider the representation of $S_{\cF^\#_\cV}$ given in Eq. \eqref{eq estimacion lambda}. 
Denote by $M = |P_{\cV//\cW^\perp}|$ and let $P_{\cV//\cW^\perp}=V\, M$ be the 
polar decomposition. Notice  that $R(M) 
=\cW$; hence, $M$ 
restricted to (the reducing subspace) $\cW$ is invertible.  On the other hand, since $R(S_{\cF^\#})=\cW$ then the restriction of $S_{\cF^\#}=S_\cF^\dagger$  
to (the reducing subspace) $\cW$ is also invertible. Hence, Eq. \eqref{eq estimacion lambda} implies that 
\beq\label{el s vdual2a}
 S_{ \cF^\#_\cV}=V\, ( \, M \, S_{\cF}^\dagger \, M 
 \,)\ V^* .
  \eeq
Since $V$ is a partial isometry with initial space $\cW$ and final space $\cV$ then, Eq. \eqref{el s vdual2a} 
implies that  
\beq\label{el s vdual3a}
\la(S_{ \cF^\#_\cV})=\big(\, \la \left(M_\cW \, 
(\,S_{\cF}^\dagger\,)_\cW\ M_\cW 
\right)
\coma 0_{|\M|-d}\,\big)\ , 
\eeq where in general $S_\cW\in B(\cW)$ denotes the restriction of $S$ to its reducing subspace $\cW$.
Since $\lambda \big(\,(S_\cF^\dagger)_\cW\,\big)=(\lambda_d^{-1},\ldots,\lambda_1^{-1})$ and
$\la(M_\cW) \stackrel{\eqref{eq ang y vec}}{=} 
(\cos(\theta_d)^{-1},\ldots,\cos(\theta_1)^{-1})$, 
% (by Eq. \eqref{eq ang y vec}) 
we see that the result is now a 
 consequence of Theorem \ref{teo hay max y min mayo} (Lidskii's multiplicative inequalities)
and the definition of log-majorization.
\end{proof}

\pausa
Consider the Notations \ref{notas}. The previous result suggests that we could take advantage of the relative geometry 
between the subspaces $\cV$ and $\cW$ to construct optimal oblique encoding-decoding schemes with prescribed properties. Indeed, 
let $U\in\op$ be a unitary operator such that $U(\cW)\subset \cW$ i.e. $\cW$ is $U$-invariant. Hence, we could consider the frame $U \cdot \cF=\{U\,f_j\}_{j\in \In}$ for $\cW$. Notice that $U\cdot \cF$ preserves essentially every property of $\cF$ (e.g., linear relations, eigenvalues list of its frame operator, norms of the elements of the frame, etc). In particular, $(U\cdot \cF)^\#=U\cdot \cF^\#$ since $S_{U\cdot \cF}= U T_\cF\,T_\cF^* U^*=US_\cF U^*$; that is, the (classical) canonical dual frame of $U\cdot \cF$ in $\cW$ is the rotation by $U$ of the canonical frame for $\cF$ in $\cW$. In particular, we get that $S_{U\cdot \cF}^\dagger= US_\cF^\dagger U^*$. Nevertheless, $\cF$ and $U\cdot \cF$ can have quite different properties with respect to $\cV$-duality as shown in the following

\begin{exa}\label{ejemplo 1}
Let $\hil=\C^3$ and let $\{e_1,\,e_2,\,e_3\}$ denote the canonical basis of $\hil$. Set $\cV=\overline{\{e_2, \frac {e_1+ e_3}{\sqrt{2}}\}}$ and $\cW=  \overline{\{e_1, e_2\}}$. Notice that in this case we have that $\C^3=\cV \oplus \cW^\perp$. Set
\begin{eqnarray*}\cF_1&=&\{e_1;\, (\cos(\pi/3), \sin(\pi/3), 0)\}\subseteq \cW \ , \\
       \cF_2&=&\{e_2;\, (\cos(\pi/2+\pi/3), \sin(\pi/2+\pi/3), 0)\}\subseteq \cW\ .
            \end{eqnarray*}
Notice that $ \cF_2=U\cdot \cF_1$ where $U$ is the rotation by (the angle) $\pi/2$ in the plane $\cW$ and such that $Ue_3=e_3$. Straightforward computations show that $\lambda(S_{(\cF_1)^\#_\cV})=(8/3; 1; 0)$ and  $\lambda(S_{(\cF_2)^\#_\cV})=(3.59; 0.74; 0)$. This last fact shows that there is no unitary operator $U'$ such that $U'(\cV)=\cV$ and such that $(\cF_2)^\#_\cV=U'(\cF_1)^\#_\cV$.
\EOE
\end{exa}

\pausa
The previous example shows that the spectral properties of the $\cV$-canonical dual of a frame $U\cdot \cF$ indeed depend on $U$ and motivates the construction of unitary operators $U\in\op$ with $U(\cW)=\cW$, such that the dual pair $(U\cdot \cF,(U\cdot \cF)^\#_\cV)$ induces an optimal encoding-decoding scheme. As a measure of optimality we could consider the minimization of the joint convex potential of the pair among all such pairs; but, since the spectral properties of $U\cdot\cF$ are independent of $U$ we are left to compute those unitary operators $U_0\in\op$, with $U_0(\cW)=\cW$, that minimize - for a non-decreasing function $h\in\convf$ -  the convex potential $P_h[(U\cdot\cF)^\#_\cV]$ among all unitary operators $U\in \op$ such that $U(\cW)=\cW$. As we shall see, there exist structural solutions to this problem.

\begin{teo}\label{teo rotop} Consider the Notations \ref{notas}.
Let $\{x_i\}_{i\in \IN{d}}\in \cW^d$ be an ONB of $\cW$ 
such that $S_\cF \, x_j=\lambda_j\,x_j$ for every $ j\in \IN{d}\,$. 
\begin{enumerate}
\item Let $U_0\in\op$ be a unitary operator such that $U_0 \, x_j= w_{d-j+1}\,$ for every $  j\in \IN{d}\,$. Then
\begin{equation}
\lambda(S_{(U_0\cdot\cF)_\cV^\#})=\Big( \, (\cos^{-2}(\theta_j)\,\lambda_{d-j+1}^{-1})_{{j\in \IN{d}}}^\downarrow  \coma 0_{|\M|-d} \, \Big) \ .
\end{equation}

\item If $h\in\convf$ is non-decreasing then
\begin{equation}\label{eq desi pot h}
P_h((U_0\cdot\cF)^\#_\cV)=\min \{ P_h((U\cdot\cF)^\#_\cV): \ U\in\op\ \text{is unitary and } , \ U(\cW)=\cW\}\ .
\end{equation}
\end{enumerate}
Moreover, if we assume further that $h\in\convfs$ and $U\cdot \cF$ attains the minimum of Eq. \eqref{eq desi pot h} then there exist principal vectors $\{w_j'\}_{j\in \IN{d}}$ and a ONB $\{x'_i\}_{i\in \IN{d}}$ for $\cW$ such that $S_\cF \, x'_j=\lambda_j\,x'_j$ and 
$Ux'_j=w'_{d-j+1}\,$, for $j\in \IN{d}\,$.
\end{teo}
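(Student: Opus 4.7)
The plan is to reduce the problem to a direct application of the multiplicative Lidskii inequality (Theorem \ref{teo hay max y min mayo}), combined with the representation of $S_{\cF^\#_\cV}$ already derived in the proof of Theorem \ref{teo acot lids multi}. The rigid rotation $U$ preserves the spectrum of $S_\cF$ but not its eigenvectors, and the key insight is that the relative position between the eigenbasis of $S_\cF$ and the principal directions $\{w_j\}_{j\in\IN{d}}$ governs the spectrum of $S_{(U\cdot\cF)^\#_\cV}$.

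First I would apply Eq. \eqref{el s vdual3a} to the rotated frame $U\cdot\cF$. Since $U(\cW)=\cW$, the operator $U$ restricts to a unitary $U_\cW\in B(\cW)$, and $S_{U\cdot\cF}^\dagger=U\,S_\cF^\dagger\,U^*$ gives $(S_{U\cdot\cF}^\dagger)_\cW=U_\cW\,(S_\cF^\dagger)_\cW\,U_\cW^*$. Writing $M=|P_{\cV//\cW^\perp}|$ as in the proof of Theorem \ref{teo acot lids multi}, one obtains
\begin{equation*}
\lambda(S_{(U\cdot\cF)^\#_\cV})=\bigl(\lambda\bigl(M_\cW\,U_\cW\,(S_\cF^\dagger)_\cW\,U_\cW^*\,M_\cW\bigr)\coma 0_{|\M|-d}\bigr).
\end{equation*}
Setting $S=(S_\cF^\dagger)_\cW$ and $V=M_\cW\,U_\cW$, we have $V^*V=U_\cW^*\,M_\cW^2\,U_\cW$, so $\lambda(V^*V)=\lambda(M_\cW^2)$ is independent of $U$. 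Theorem \ref{teo hay max y min mayo} then supplies the log-majorization lower bound $\lambda(S)\circ \lambda(V^*V)\ua \prec_{\log} \lambda(VSV^*)$; using the orderings $\lambda_j(S)=\lambda_{d-j+1}^{-1}$ and $\lambda_j(M_\cW^2)=\cos^{-2}(\theta_{d-j+1})$, a straightforward rearrangement identifies this left-hand side with $((\cos^{-2}(\theta_j)\,\lambda_{d-j+1}^{-1})_{j\in\IN{d}})\da$.

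For item (1), I would verify that the specific $U_0$ attains this bound by direct computation. The choice $U_0\,x_j=w_{d-j+1}$ makes $(S_\cF^\dagger)_\cW$ and $U_0^*\,M_\cW^2\,U_0$ simultaneously diagonal in the basis $\{x_j\}_{j\in\IN{d}}$, since $U_0^*\,M_\cW^2\,U_0\,x_j=\cos^{-2}(\theta_{d-j+1})\,x_j$ and $(S_\cF^\dagger)_\cW\,x_j=\lambda_j^{-1}\,x_j$. Using the identity $\lambda(VSV^*)=\lambda(S\,V^*V)$, the eigenvalues of $V_0\,S\,V_0^*$ are therefore $\{\lambda_j^{-1}\cos^{-2}(\theta_{d-j+1})\}_{j\in\IN{d}}$, whose decreasing rearrangement coincides with the Lidskii lower bound. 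Item (2) is then a formal consequence of the implications $\prec_{\log}\Rightarrow\prec_w$ together with monotonicity of tracial sums of non-decreasing convex functions under $\prec_w$ (Section \ref{sec 2.2.}).

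The delicate step is the equality case under strict convexity. If $h\in\convfs$ and $U\cdot\cF$ attains the minimum in \eqref{eq desi pot h}, the uniqueness property of $\prec_w$ recalled in Section \ref{sec 2.2.} forces $\lambda(VSV^*)$ to equal the lower Lidskii bound exactly. Invoking the equality clause of Theorem \ref{teo hay max y min mayo}, there exists an ONB $\{v_i\}_{i\in\IN{d}}$ of $\cW$ with $S=\sum_i \lambda_i(S)\,v_i\otimes v_i$ and $|V|=U_\cW^*\,M_\cW\,U_\cW=\sum_i \lambda_{d+1-i}(V^*V)\rai\,v_i\otimes v_i$. The first identity says that $v_i$ is an eigenvector of $S_\cF$ for $\lambda_{d-i+1}$; conjugating the second by $U_\cW$ shows that $U v_i$ is a unit eigenvector of $M_\cW$ for $\cos^{-1}(\theta_i)$, hence a principal vector in $\cW$. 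Relabeling $x'_j=v_{d-j+1}$ and $w'_j=U\,x'_j$ yields the stated decomposition. The main obstacle is handling possible multiplicities in either the eigenvalues of $S_\cF$ or the principal angles: the equality case of Lidskii only prescribes a simultaneous diagonalization, and one must carefully exploit the freedom to choose eigenvectors within each eigenspace of $M_\cW$ (resp.\ $S_\cF$) in order to produce a bona fide family of principal vectors $\{w'_j\}$ (resp.\ an eigenbasis $\{x'_j\}$ of $S_\cF$), which is exactly why the statement permits these families to differ from the ones initially fixed.
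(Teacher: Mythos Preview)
Your proposal is correct and follows essentially the same route as the paper's own proof: reduce to the compression $M_\cW\,U_\cW\,(S_\cF^\dagger)_\cW\,U_\cW^*\,M_\cW$ on $\cW$, apply the multiplicative Lidskii inequality (Theorem \ref{teo hay max y min mayo}), verify that $U_0$ attains the lower bound by direct diagonalization, and handle the equality case via the equality clause of that same theorem. The only cosmetic difference is that the paper packages the Lidskii step by citing Theorem \ref{teo acot lids multi} applied to $U\cdot\cF$ (so the varying object is the ``middle'' operator $U\,S_\cF^\dagger\,U^*$ with fixed outer factor $M_\cW$), whereas you absorb $U_\cW$ into $V=M_\cW U_\cW$ and keep $S=(S_\cF^\dagger)_\cW$ fixed; since $\lambda(V^*V)=\lambda(M_\cW^2)$ either way, the two formulations are equivalent. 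Your concern about multiplicities in the equality analysis is reasonable caution but not an actual obstacle: the vectors $w'_j:=U v_j$ produced by the Lidskii equality clause already form an ONB of eigenvectors of $M_\cW$, hence are automatically principal vectors, with no further adjustment needed.
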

\begin{proof} Let $U\in\op$ be any unitary operator such that $U(\cW)=\cW$ and let $U\cdot\cF=\{Uf_i\}_{i\in\In}$. Then, notice that 
$$
(U\cdot \cF)^\#_\cV=  P_{\cV//\cW^\perp} \,(U\cdot \cF)^\#=P_{\cV//\cW^\perp} \ U\cdot \cF^\#
$$ 
and hence, in general we get that
\beq\label{el S vdual}
 S_{(U\cdot \cF)^\#_\cV}=P_{\cV//\cW^\perp}\,  U\, S_{\cF^\#} \, U^* \,P_{\cV//\cW^\perp}^*\ .
 \eeq
We can now argue as in the proof of Theorem \ref{teo acot lids multi} considering $M= |P_{\cV//\cW^\perp}|$ (and 
the polar decomposition $P_{\cV//\cW^\perp}=V \, M$) and conclude that 
\beq\label{el s vdual3}
\la(S_{(U\cdot \cF)^\#_\cV})=
\big(\,  \la\left( M_\cW \, 
(\, U\, S_{\cF}^\dagger \, U^*)_\cW \, M_\cW \, 
\right)
\coma 0_{|\M|-d}\, \big)\ , 
\eeq 
where in general $S_\cW\in B(\cW)$ denotes the restriction of $S$ to its reducing subspace $\cW$.
Using Eq. \eqref{eq ang y vec}, if $U_0$ is as in item 1 then
$$ 
M_\cW \,
\ (\, U_0\, S_{\cF}^\dagger \, U_0^*)_\cW\ M_\cW \, \  w_j
= \cos^{-2}(\theta_j)\,\la_{d-j+1}^{-1}\ w_j \peso{ for }  j\in\IN{d}\ . 
$$
Thus, the previous facts together with Eq. \eqref{el s vdual3} show item 1.

\pausa
In case $U\in\op$ is a unitary operator such that $U(\cW)=\cW$ then Theorem \ref{teo acot lids multi} implies that 
\beq\label{ec se usa despues1}
\prod_{j\in\IN{k}}\la_j(S_{(U_0\,\cF)^\#_\cV})\leq \prod_{j\in\IN{k}}\la_j(S_{(U\,\cF)^\#_\cV}) 
\quad ,\quad k\in\IN{d}\ . 
\eeq
As explained in Section \ref{sec 2.2.}, Eq. \eqref{ec se usa despues1} implies that 
\beq\label{ec se usa despues2}
\sum_{j\in\IN{k}}\la_j(S_{(U_0\,\cF)^\#_\cV})\leq \sum_{j\in\IN{k}}\la_j(S_{(U\,\cF)^\#_\cV}) 
\quad ,\quad k\in\IN{d}\ . 
\eeq 
If $h\in\convf$ is non-decreasing then, by the submajorization relation in Eq. \eqref{ec se usa despues2}, we conclude that 
$$
P_h((U_0\cdot \cF)^\#_\cV)=\sum_{j\in\IN{d}}h(\la_j(S_{(U_0\,\cF)^\#_\cV}))\leq\sum_{j\in\IN{d}}h(\la_j(S_{(U\,\cF)^\#_\cV}))
=P_h((U\cdot \cF)^\#_\cV)\ , 
$$ which proves Eq. \eqref{eq desi pot h}. Similarly, the final claim follows from Eq. \eqref{el s vdual3} and Lidskii's multiplicative inequality, as stated in Theorem \ref{teo hay max y min mayo} and the properties of log-majorization described in Section \ref{sec 2.2.}.
\end{proof}

\pausa
Consider the Notations \ref{notas}. Then, the previous theorem describes the rigid rotations $U_0$ that leave invariant $\cW$ and such that spectral structure $\la((U_0\cdot\cF)^\#_\cV)$ of the oblique canonical $\cV$-dual of $U_0\cdot\cF$ is optimal with respect to log-majorization. 

\pausa
On the other hand, for a fixed rigid rotation $U$ that leaves invariant $\cW$ and for a fixed $t\geq \tr(S_{(U\cdot\cF)^\#_\cV})$ 
Theorem \ref{sobre nuduales optimos} describes the spectral  structure $\la^\#_{\cV,\,t}(U\cdot\cF)$
of those oblique $\cV$-duals $\cG^{\rm op}_t(U)\in \cD_t(U\cdot\cF)$ that simultaneously minimize every convex potential within the set $\cD_t(U\cdot\cF)$. It is then natural to wonder whether the spectral structure $\la^\#_{\cV,\,t}(U_0\cdot\cF)$ of 
$\cG^{\rm op}_t(U_0)$ (optimal dual with trace restriction based on an optimal rigid rotation of $\cF$) has some 
optimality property. In order to tackle this problem we consider the following result.

\begin{lem}\label{lem comp entre nut op} 
Let $\la=(\la_i)_{i\in\IN{d}} \coma \mu=(\mu_i)_{i\in\IN{d}}\in\R_{\geq 0}^d\,$, let $m\leq 0$ be an integer and assume that $\la \prec_{w} \mu$. If we let $\nulamt $ and $\numumt $ be as in Remark \ref{rem estruc espec Ut} (based on $\la$ and $\mu$ respectively) for some $t\geq \tr\mu\ (\geq \tr\la)$ then we have that $\nulamt \prec\numumt $.
\end{lem}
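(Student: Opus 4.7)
My plan is to reduce the desired majorization to a one-parameter inequality via the standard characterization: for non-negative vectors $x, y \in \R^d$, one has $x\prec y$ if and only if $\tr x = \tr y$ and
$F_x(s) := \sum_{i\in\IN{d}} (x_i - s)^+ \leq F_y(s)$ for every $s\in\R$. The trace equality comes for free from the defining relation $h_m(c_{\la,\,m}(t))= t-\tr\la$ (and its analog for $\mu$), which gives $\tr\nulamt=t=\tr\numumt$. So only the pointwise inequality $F_{\nulamt}(s)\leq F_{\numumt}(s)$ needs to be verified.

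Set $c_\la=c_{\la,\,m}(t)$ and $c_\mu=c_{\mu,\,m}(t)$. Since $m\leq 0$, each coordinate of $\nulamt$ equals $\max(c_\la,\la_i)$. A direct case analysis then yields the explicit piecewise formula
$$
F_{\nulamt}(s) =
\begin{cases}
t - ds & \text{if } s\leq c_\la,\\
\sum_{i\in\IN{d}} (\la_i - s)^+ & \text{if } s > c_\la,
\end{cases}
$$
with continuity at $s=c_\la$ ensured by the defining identity for $c_\la$. The analogous formula holds for $F_{\numumt}$ in terms of $\mu$ and $c_\mu$.

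The crux will be to show $c_\la\geq c_\mu$. Since $c\mapsto \sum_i (c-\mu_i)^+$ is non-decreasing, this reduces to verifying the inequality $\sum_i(c_\la-\mu_i)^+\geq t-\tr\mu$. Applying the elementary identity $\sum_i(c-x_i)^+ = \sum_i(x_i-c)^+ + dc - \tr x$ to both $x=\la$ and $x=\mu$ at $c=c_\la$, and using $\sum_i(c_\la-\la_i)^+=t-\tr\la$, this inequality simplifies to
$$
\sum_{i\in\IN{d}}(\mu_i-c_\la)^+\ \geq\ \sum_{i\in\IN{d}}(\la_i-c_\la)^+\ ,
$$
which is precisely the hypothesis $\la\prec_w\mu$ applied at the threshold $s=c_\la$. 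I expect this algebraic translation to be the main obstacle of the proof, since it has to match the defining identities for the two ``water-fill'' levels $c_\la$ and $c_\mu$ against the submajorization hypothesis.

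Once $c_\la\geq c_\mu$ is in place, I would finish by splitting the real line into three regions. For $s\leq c_\mu$ both $F_{\nulamt}$ and $F_{\numumt}$ equal $t-ds$, so equality holds. For $c_\mu\leq s\leq c_\la$, $F_{\nulamt}(s)=t-ds$, while the needed $F_{\numumt}(s)\geq t-ds$ rearranges by the same identity to $\sum_i(s-\mu_i)^+\geq t-\tr\mu$, which follows by monotonicity since $\sum_i(c_\mu-\mu_i)^+ = t-\tr\mu$. Finally, for $s\geq c_\la$, both $F_{\nulamt}(s)$ and $F_{\numumt}(s)$ reduce to $\sum_i(\la_i-s)^+$ and $\sum_i(\mu_i-s)^+$ respectively, and the inequality is once more a direct consequence of $\la\prec_w\mu$. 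Combined with the trace equality noted at the outset, this establishes $\nulamt\prec\numumt$.
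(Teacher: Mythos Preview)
Your argument is correct and follows a genuinely different route from the paper's. The paper works directly with partial sums: it writes $\nulamt=(\la_1,\ldots,\la_r,c,\ldots,c)$, checks the first $r$ partial-sum inequalities using $\la\prec_w\mu$ together with $\numumt\geqp\mu$ entrywise, and then invokes an external result (\cite[Lemma~5.6]{MR12}) to propagate the submajorization to the remaining indices. Your approach instead passes through the Hardy--Littlewood--P\'olya characterization $x\prec y\iff \tr x=\tr y$ and $\sum_i(x_i-s)^+\leq\sum_i(y_i-s)^+$ for all $s$, exploits the clean ``water-filling'' description $(\nulamt)_i=\max(c_\la,\la_i)$ to compute $F_{\nulamt}$ explicitly, and reduces everything to the single comparison $c_\la\geq c_\mu$. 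This makes the proof self-contained (no appeal to \cite{MR12}) and arguably more transparent, since the geometric content---that raising the water level to a fixed total mass $t$ produces a higher level for the submajorized vector---is isolated as the key step.

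One small point: your sentence ``since $c\mapsto\sum_i(c-\mu_i)^+$ is non-decreasing, this reduces to\ldots'' is not quite enough as stated, because a merely non-decreasing function does not let you infer $c_\la\geq c_\mu$ from $g(c_\la)\geq g(c_\mu)$. What you need is that $g$ is \emph{strictly} increasing on $[\mu_d,\infty)$ (which it is, as noted in Remark~\ref{rem estruc espec Ut}), together with the observation that if $g(c_\la)\geq t-\tr\mu>0$ then automatically $c_\la>\mu_d$; the residual edge case $t=\tr\mu$ (where $c_\mu=\mu_d$) is handled by noting that $\sum_i\max(c_\mu,\la_i)\leq\sum_i\max(c_\mu,\mu_i)=t=\sum_i\max(c_\la,\la_i)$ and using strict monotonicity of $c\mapsto\sum_i\max(c,\la_i)$ on $[\la_d,\infty)$. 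With that adjustment the argument is complete.
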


\begin{proof} Recall that by construction $\tr(\nulamt )=\tr(\numumt )=t$. Hence, in case $\nulamt =\frac{t}{d}\cdot\uno$ the result follows from the well known relation $ \frac{t}{d}\cdot\uno\prec \rho$ for every $\rho\in\R^d$ such that $\tr\rho=t$.
Otherwise (see Remark \ref{rem estruc espec Ut}), there exists $1\leq r\leq d-1$ such that 
$$ 
\nulamt =(\la_1 \coma \ldots \coma \la_{r} \coma c\cdot \uno_{d-r}) \peso{with} c\leq \la_r\ .$$
On the other hand we can write $\numumt  =(\al\coma  \beta)\in(\R_{\geq 0}^d)\da$ where 
$$
\al=(\mu_i + (c\,'-\mu_i)^+)_{i=1}^{r}\in (\R_{\geq 0}^r)\da \peso{and} \beta=(\mu_i + (c\,'-\mu_i)^+)_{i=r+1}^{d}\in (\R_{\geq 0}^{d-r})\da\ .$$
Therefore, for every $k\in\IN{r}$ we have that
$$\sum_{i\in \IN{k}} \la_i \leq \sum_{i\in \IN{k}} \mu_i\leq \sum_{i\in \IN{k}} (\mu_i + (c\,'-\mu_i)^+) \implies (\la_i)_{i\in \IN{r}}\prec_w \alpha \ ,$$
where we have used that $\la\prec_w\mu$ in the first inequality above. By \cite[Lemma 5.6]{MR12} we conclude that 
$ \nulamt \prec_w \numumt $. The result now follows from the equality $\tr(\nulamt )=\tr(\numumt )$.
\end{proof}

\begin{teo}\label{el mejor de los mas mejores}
Consider the Notations \ref{notas} and assume that $n\geq 2d$ (i.e. $2d-n\leq 0$).   
Let $\{x_i\}_{i\in \IN{d}}\in \cW^d$ be an ONB of eigenvectors for $S_\cF$ on $\cW$ i.e., 
such that $S_\cF \, x_j=\lambda_j\,x_j$ for every $ j\in \IN{d}\,$. 
Let $U_0\in\op$ be a unitary operator such that $U_0 \, x_j= w_{d-j+1}$ for every $ j\in \IN{d}\,$. 
Then,
\ben
\item If $U\in\op$ is a unitary operator such that $U(\cW)=\cW$ then $\tr(S_{(U_0\cdot\cF)^\#_\cV})\leq \tr(S_{(U\cdot\cF)^\#_\cV})$.
\item If $t\geq \tr(S_{(U\cdot\cF)^\#_\cV})$ and we let $\cG^{\rm op}_t(U)\in\cD_{\cV,\,t}(U\cdot\cF)$ (resp.
$\cG^{\rm op}_t(U_0)\in\cD_{\cV,\,t}(U_0\cdot\cF)$) be the optimal dual as in Theorem \ref{sobre nuduales optimos} then
 for every $h\in\convf$
\beq\label{opti tutti}
 P_h( \cG^{\rm op}_t(U_0)) \leq P_h(\cG^{\rm op}_t(U))\ . 
 \eeq
\een
\end{teo}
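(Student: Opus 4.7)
The plan is to combine Theorem \ref{teo rotop}, Theorem \ref{sobre nuduales optimos}, and Lemma \ref{lem comp entre nut op} in a direct chain: item 1 will follow from a log-majorization between the spectra of the canonical $\cV$-duals associated with $U_0\cdot\cF$ and $U\cdot\cF$, and item 2 from feeding that relation into Lemma \ref{lem comp entre nut op} and then into the majorization--convexity link from Section \ref{sec 2.2.}.

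For item 1, I would start from Theorem \ref{teo rotop}(1), which gives
$$
\la(S_{(U_0\cdot\cF)^\#_\cV}) = \big(\,(\cos^{-2}(\theta_j)\,\la_{d-j+1}^{-1})^\downarrow_{j\in\IN{d}}\coma 0_{|\M|-d}\,\big),
$$
that is, exactly the lower-bound vector $\mu$ appearing in Theorem \ref{teo acot lids multi}. For any unitary $U\in\op$ with $U(\cW)=\cW$, the frame $U\cdot\cF$ still satisfies $\la(S_{U\cdot\cF})=\la$, so Theorem \ref{teo acot lids multi} applied to $U\cdot\cF$ yields the product inequalities
$$
\prod_{j\in\IN k}\la_j(S_{(U_0\cdot\cF)^\#_\cV}) \leq \prod_{j\in\IN k}\la_j(S_{(U\cdot\cF)^\#_\cV}) \peso{for every} k\in\IN{d}.
$$
Equality at $k=d$ is checked directly from Eq. \eqref{el s vdual3}: since $U$ preserves $\cW$, $(US_\cF^\dagger U^*)_\cW = (U|_\cW)(S_\cF^\dagger)_\cW(U|_\cW)^*$, so its determinant (and hence the product of all the nonzero eigenvalues of $S_{(U\cdot\cF)^\#_\cV}$) is independent of $U$. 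This establishes $\la(S_{(U_0\cdot\cF)^\#_\cV})\prec_{\log}\la(S_{(U\cdot\cF)^\#_\cV})$, and hence $\prec_w$; taking $k=d$ in the submajorization yields the trace inequality claimed in item 1.

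For item 2, I fix $t\geq \tr(S_{(U\cdot\cF)^\#_\cV})$ and set $\alpha=\la(S_{(U_0\cdot\cF)^\#_\cV})$, $\beta=\la(S_{(U\cdot\cF)^\#_\cV})$. By item 1, $\alpha\prec_w \beta$ and in particular $t\geq \tr\beta \geq \tr\alpha$. Since $n\geq 2d$, the integer $m=2d-n\leq 0$, so Lemma \ref{lem comp entre nut op} applies and yields the full majorization $\nu_{\alpha, m}(t)\prec \nu_{\beta, m}(t)$. By Theorem \ref{sobre nuduales optimos} applied to $U_0\cdot\cF$ and $U\cdot\cF$ with the common parameter $t$, these two vectors are precisely the nonzero parts of $\la(S_{\cG^{\rm op}_t(U_0)})$ and $\la(S_{\cG^{\rm op}_t(U)})$, respectively. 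For any $h\in\convf$, majorization together with item (1) of Section \ref{sec 2.2.} then gives
$$
P_h(\cG^{\rm op}_t(U_0)) = \sum_{i\in\IN d}h(\nu_{\alpha,m}(t)_i) \leq \sum_{i\in\IN d}h(\nu_{\beta, m}(t)_i) = P_h(\cG^{\rm op}_t(U)),
$$
which is exactly Eq. \eqref{opti tutti}.

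The crucial step is Lemma \ref{lem comp entre nut op}, which promotes the weak submajorization of the base spectra obtained in item 1 to \emph{full} majorization of the water-filling vectors $\nu_{\cdot\coma m}(t)$; this upgrade from $\prec_w$ to $\prec$ is what permits an arbitrary (not necessarily monotone) $h\in\convf$ in item 2, rather than only the non-decreasing ones. The only point that needs some care is the equality at $k=d$ needed to get honest log-majorization in item 1, which rests on the determinant of the compressed operator $(US_\cF^\dagger U^*)_\cW$ being independent of any rotation $U$ fixing $\cW$.
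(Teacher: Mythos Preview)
Your proof is correct and follows essentially the same route as the paper's: both argue via Theorem \ref{teo acot lids multi}/Theorem \ref{teo rotop} to get (log-)submajorization between $\la(S_{(U_0\cdot\cF)^\#_\cV})$ and $\la(S_{(U\cdot\cF)^\#_\cV})$, deduce the trace inequality for item 1, and then feed this into Lemma \ref{lem comp entre nut op} and Theorem \ref{sobre nuduales optimos} to obtain the full majorization $\nu_{\alpha,m}(t)\prec\nu_{\beta,m}(t)$ needed for item 2. Your explicit determinant check for equality at $k=d$ and your remark that the upgrade from $\prec_w$ to $\prec$ is what allows arbitrary (not necessarily monotone) $h\in\convf$ are helpful clarifications, but the overall strategy is the same as the paper's.
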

\begin{proof}
As explained in the proof of Theorem \ref{teo rotop} if $U$ and $U_0$ are as above then Eq. \eqref{ec se usa despues2} holds. In this case $$\tr(S_{(U_0\cdot\cF)^\#_\cV})=
\sum_{j\in\IN{d}}\la_j(S_{(U_0\,\cF)^\#_\cV})
\leq \sum_{j\in\IN{d}}\la_j(S_{(U\,\cF)^\#_\cV})=
\tr(S_{(U\cdot\cF)^\#_\cV}) \ ,$$
which shows item 1. On the other hand, if $t\geq \tr(S_{(U\cdot\cF)^\#_\cV})\geq \tr(S_{(U_0\cdot\cF)^\#_\cV})$ then Eq. \eqref{ec se usa despues2} together with 
Lemma \ref{lem comp entre nut op} and Theorem \ref{sobre nuduales optimos} (notice that in this case $m=2d-n\leq 0$)
imply that 
$$
\sum_{j\in\IN{k}}\la_j(S_{\cG^{\rm op}_t(U_0)})\leq \sum_{j\in\IN{k}}\la_j(S_{\cG^{\rm op}_t(U)})\quad , \quad k\in\IN{d}\ .$$
 Hence, Eq. \eqref{opti tutti} follow from the properties of majorization described in Section \ref{sec 2.2.} and Definition \ref{pot generales}.
\end{proof}

\pausa
We conjecture that Theorem \ref{el mejor de los mas mejores} is also true in case $2d-n \in\IN{d-1}\,$. We shall consider this problem elsewhere.

\section{Aliasing in oblique duality}\label{sec 5}

Let $\cW,\, \cV\subset \hil$ be closed subspaces such that $\cV\oplus \cW^\perp=\hil$ (or equivalently $\cW\oplus \cV^\perp=\hil$). Recall that in this context the aliasing norm associated to the consistent sampling
\beq\label{overall consist}
f\mapsto \tilde f=P_{\cW// \cV^\perp} f \ \ , \ \ f\in \hil 
\eeq
(see \cite{YEldar1,Janssen}) is 
given by
\beq\label{alias} 
A(\cW,\cV)=\sup_{e\in \cW^\perp\setminus \{0\}} \frac{\|P_{\cW// \cV^\perp}\, e\|}{\|e\|} 
= \|P_{\cW// \cV^\perp} \, P_{\cW^\perp}\|\ .
\eeq
Notice that the aliasing norm measures the incidence of the orthogonal complement of $\cW$ in the overall (oblique) encoding-decoding scheme in Eq. \eqref{overall consist} based on these two subspaces. We can interpret $A(\cW,\cV)$ as a measure of the amount of noise that we would get in the oblique encoding-decoding scheme when sampling a perturbed signal $f+e$ that has a component $e\in \cW^\perp$. This phenomenon is of interest only when $\cV\neq \cW$ (as $A(\cW,\cW)=0$).

\begin{lem}\label{norma prod}
Consider the Notations \ref{notas}. Then
\ben
\item $|P_{\cW\orto} P_\cV| \ v_i= \sin (\theta_i)\, v_i $ for every $i\in\IN{d}\,$.
\item $|P_{\cW\orto} P_{\cV//\cW\orto}|^2 \,w_i
=\tan^2(\theta_i) \,w_i$ for every $i\in\IN{d}\,$.
\een
\end{lem}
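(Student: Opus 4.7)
The plan is to compute $|T|^2=T^*T$ for each operator $T$ in question, reduce it to a combination of operators whose action on the relevant principal vectors is already known from Eq. \eqref{sin primas} (for item 1) and Eq. \eqref{eq ang y vec} (for item 2), and then evaluate on the principal basis.

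For item 1, set $A=P_{\cW^\perp}P_\cV$ and expand
\[
A^*A=P_\cV P_{\cW^\perp}P_\cV=P_\cV-P_\cV P_\cW P_\cV\ .
\]
Applying to $v_i$ and using Eq. \eqref{sin primas} twice, namely $P_\cW v_i=\cos(\theta_i)\,w_i$ followed by $P_\cV w_i=\cos(\theta_i)\,v_i$, yields $P_\cV P_\cW P_\cV v_i=\cos^2(\theta_i)\,v_i$, so $A^*A\,v_i=\sin^2(\theta_i)\,v_i$. Since $\sin(\theta_i)\geq 0$, taking square roots gives $|A|\,v_i=\sin(\theta_i)\,v_i$.

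For item 2, write $B=P_{\cW^\perp}P_{\cV//\cW^\perp}$ and observe the auxiliary identity
\[
P_\cW\,P_{\cV//\cW^\perp}=P_\cW\ ,
\]
which follows by decomposing $I=P_{\cV//\cW^\perp}+P_{\cW^\perp//\cV}$ and noting that $P_\cW$ annihilates $R(P_{\cW^\perp//\cV})=\cW^\perp$. Taking adjoints yields $P_{\cV//\cW^\perp}^*\,P_\cW=P_\cW$. Then
\[
B^*B=P_{\cV//\cW^\perp}^*(I-P_\cW)P_{\cV//\cW^\perp}=|P_{\cV//\cW^\perp}|^2-P_\cW\ .
\]
Applying to $w_i$ and using Eq. \eqref{eq ang y vec}, we have $|P_{\cV//\cW^\perp}|^2 w_i=\cos^{-2}(\theta_i)\,w_i$ and $P_\cW w_i=w_i$, so
\[
B^*B\,w_i=\bigl(\cos^{-2}(\theta_i)-1\bigr)\,w_i=\tan^2(\theta_i)\,w_i\ ,
\]
as required.

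The only non-routine point is the auxiliary identity $P_\cW\,P_{\cV//\cW^\perp}=P_\cW$, which is not explicitly isolated earlier in the paper; I expect this to be the main (small) obstacle, but it follows cleanly from the standard oblique decomposition $I=P_{\cV//\cW^\perp}+P_{\cW^\perp//\cV}$. Everything else is an immediate computation on the principal basis using identities already established in Eqs. \eqref{sin primas} and \eqref{eq ang y vec}.
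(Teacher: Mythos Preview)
Your proof is correct. For item 1 you do essentially what the paper does: compute $|P_{\cW^\perp}P_\cV|^2=P_\cV-P_\cV P_\cW P_\cV$ and evaluate on $v_i$ using the known action of $P_\cW P_\cV$ on principal vectors.

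For item 2 your route differs from the paper's. The paper inserts $P_\cV$ to write $|P_{\cW^\perp}P_{\cV//\cW^\perp}|^2=P_{\cW//\cV^\perp}\,|P_{\cW^\perp}P_\cV|^2\,P_{\cV//\cW^\perp}$, then establishes the intermediate identities $P_{\cV//\cW^\perp}w_i=\cos(\theta_i)^{-1}v_i$ and $P_{\cW//\cV^\perp}v_i=\cos(\theta_i)^{-1}w_i$ (via the Moore--Penrose relation $(P_{\cV//\cW^\perp})^\dagger=P_\cW P_\cV$), and finally applies item 1 to the middle factor. You instead derive the operator identity $|P_{\cW^\perp}P_{\cV//\cW^\perp}|^2=|P_{\cV//\cW^\perp}|^2-P_\cW$ directly from $P_\cW P_{\cV//\cW^\perp}=P_\cW$, and then invoke only Eq.~\eqref{eq ang y vec}. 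Your argument is shorter and more self-contained, avoiding the detour through $\cV$; the paper's version has the side benefit of recording the explicit formulas $P_{\cV//\cW^\perp}w_i=\cos(\theta_i)^{-1}v_i$ and $P_{\cW//\cV^\perp}v_i=\cos(\theta_i)^{-1}w_i$, though these are not reused later.
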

\proof
Recall that with the Notations \ref{notas}, $\{v_i\}_{i\in\IN{d}}$ is an ONB of $\cV$ such that
$|P_\cW\,P_\cV| \ v_i= \cos(\theta_i)\, v_i\,$, for $i\in\IN{d}\,$.
In this case, 
\begin{equation}\label{ident util1}
|P_{\cW\orto} P_\cV|^2=P_\cV- |P_\cW\, P_\cV|^2
\ \ \implies \ \ |P_{\cW\orto} P_\cV| \ v_i= \sin (\theta_i)\, v_i
\peso{for every} i\in\IN{d}\ .
\end{equation}
To prove item 2, let us fix $i\in\IN{d}\,$. 
By Eq. \eqref{sin primas} we know that 
$P_\cW P_\cV \, v_i = P_\cW \,v_i=\cos(\theta_i)\, w_i\,$.
On the other hand, recall from  Eq. \eqref{ec proy ob} that  
\beq\label{viwi}
(P_{\cV//\cW^\perp}) ^\dagger=P_\cW\,P_\cV \implies 
P_{\cV//\cW\orto}\, w_i 
= \cos(\theta_i)^{-1}\,v_i
\ ,
\eeq
since $v_i \in \cV = (\ker \, P_\cW\,P_\cV)\orto $ and 
$w_i \in \cW = R(P_\cW\,P_\cV)$. 
Similarly, we get that 
\beq\label{viwisi}
P_{\cW//\cV\orto} \,v_i \stackrel{\eqref{viwi}}{=}
P_{\cV//\cW\orto}^*( \cos(\theta_i)P_{\cV//\cW\orto} w_i  )=
 \cos(\theta_i) \,|P_{\cV//\cW\orto}|^2 \, w_i 
\stackrel{\eqref{eq ang y vec}}{=} \cos(\theta_i)^{-1}\,w_i\,.
\eeq
On the other hand, 
$$ 
|P_{\cW\orto}P_{\cV//\cW\orto}|^2=|P_{\cW\orto}\ P_\cV \ P_{\cV//\cW\orto}|^2
= P_{\cW//\cV\orto} \ |P_{\cW\orto}P_\cV |^2\ P_{\cV//\cW\orto}\ .
$$ 
Hence, using the previous remarks and Eq. \eqref{ident util1} we get that, 
for every $i\in\IN{d}\,$, 
$$
\barr{rcl}
|P_{\cW\orto} P_{\cV//\cW\orto}|^2 \,w_i
& \stackrel{\eqref{viwi}}{=} & 
P_{\cW//\cV\orto} \ |P_{\cW\orto}P_\cV |^2 (\cos(\theta_i)^{-1} \,v_i)
\\&&\\ 
&\stackrel{\eqref{ident util1}}{=}
&\frac{\sin(\theta_i)^2}{\cos(\theta_i)} \ P_{\cW//\cV\orto} \,v_i
\stackrel{\eqref{viwisi}}{=} 
\tan^2(\theta_i) \,w_i\ . 
\earr
$$
This completes the proof. \QED

\pausa
Consider now the Notations \ref{notas} , so in particular $\cV,\,\cW$ are finite dimensional. Then, using that $\|P_{\cW// \cV^\perp} \|=\cos(\theta_d)^{-1}$  we get $A(\cW,\cV)\leq \cos(\theta_d)^{-1}$ (i.e. with the notations of Remark \ref{rem comp angulos} we get $A(\cV,\cW)\leq \cos(\theta_{\cW,\,\cV})^{-1}$, see \cite{YEldar1}). Nevertheless, the previous bound for $A(\cW,\cV)$ is  not sharp: in case $\cV=\cW$ then $A(\cW,\cW)=0$, yet $\cos(\theta_d)^{-1}=1$. Next we compute the exact value of the aliasing norm:

\begin{cor}\label{pro mejor cota}
Consider the Notations \ref{notas}. Then, $$A(\cW,\cV) =   \tan (\theta_d)\ . $$
\end{cor}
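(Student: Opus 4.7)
The plan is to reduce the computation of $A(\cW,\cV)$ to the spectral information provided by item 2 of Lemma \ref{norma prod}. First, starting from the definition in Eq.~\eqref{alias},
$$
A(\cW,\cV)=\|P_{\cW//\cV^\perp}\,P_{\cW^\perp}\|,
$$
I would pass to the adjoint. Since $(P_{\cW//\cV^\perp})^{*}=P_{\cV//\cW^\perp}$ and $P_{\cW^\perp}$ is selfadjoint, taking adjoints inside the operator norm gives
$$
A(\cW,\cV)=\|P_{\cW^\perp}\,P_{\cV//\cW^\perp}\|.
$$

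Now set $T=P_{\cW^\perp}\,P_{\cV//\cW^\perp}$. The identity $\|T\|=\||T|\|$ together with item 2 of Lemma \ref{norma prod} shows that $|T|^{2}=T^{*}T$ satisfies
$$
|T|^{2}\,w_i=\tan^{2}(\theta_i)\,w_i \peso{for every} i\in\IN{d}\,.
$$
Since $\ker P_{\cV//\cW^\perp}=\cW^\perp$, the operator $T$ vanishes on $\cW^\perp$, hence $|T|^{2}$ also vanishes on $\cW^\perp$. Because $\{w_i\}_{i\in\IN{d}}$ is an orthonormal basis of $\cW$, this fully diagonalizes the positive finite rank operator $|T|^{2}$: its nonzero eigenvalues are exactly $\{\tan^{2}(\theta_i)\}_{i\in\IN{d}}\,$.

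Finally, the operator norm of a positive finite rank operator equals its largest eigenvalue. Since $\theta_1\leq\cdots\leq\theta_d<\pi/2$ (recall $\theta_d<\pi/2$ is ensured by the assumption $\cW^{\perp}\oplus\cV=\hil$), and $\tan$ is increasing on $[0,\pi/2)$, the maximum is attained at $i=d$, so
$$
\|T\|^{2}=\||T|^{2}\|=\tan^{2}(\theta_d)\ ,
$$
which yields $A(\cW,\cV)=\tan(\theta_d)$, as claimed. The only mildly delicate point is to handle the adjoint identification $(P_{\cW//\cV^\perp})^{*}=P_{\cV//\cW^\perp}\,$; this is a standard fact about oblique projections (the range and kernel of the adjoint being the orthogonal complements of the kernel and range, respectively), so there is no substantial obstacle.
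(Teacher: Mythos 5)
Your proof is correct and follows essentially the same route as the paper's: both pass to the adjoint to rewrite $A(\cW,\cV)=\|\,|P_{\cW^\perp}P_{\cV//\cW^\perp}|\,\|$ and then read off the norm from item 2 of Lemma \ref{norma prod}. The paper compresses this into a single chain of equalities; your version simply spells out the implicit steps (the adjoint identity $(P_{\cW//\cV^\perp})^*=P_{\cV//\cW^\perp}$, that $\cW$ reduces $|T|^2$ because $T$ kills $\cW^\perp$, and that the norm of the positive finite-rank operator is its top eigenvalue $\tan^2(\theta_d)$, using $\theta_d<\pi/2$).
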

\proof
By the definition of the aliasing \eqref{alias} and Lemma \ref{norma prod}  
\beq
A(\cW,\cV) = \| P_{\cW// \cV^\perp}\, P_{\cW\orto}\| = 
\| \,|P_{\cW\orto} P_{\cV//\cW\orto}| \,\|
=  \max _{i\in\IN{d}} \, \tan (\theta_i) = \tan (\theta_d) \ .\QEDP
\eeq

\pausa
Consider the Notations \ref{notas}. Let $\cG=\{g_i\}_{i\in \IN{d}}\in \cD_\cV(\cF)$ i.e.,
such that $T_\cF\,T_\cG^*=P_{\cW//\cV\orto}\,$.
 Then, when applying the encoding-decoding scheme induced by the
 pair $(\cF,\cG)$ the orthogonal complement $\cW\orto$ may also have an
 incidence in the sampling process; in case $\cV\neq \cW$, if we sample the
 perturbation $f+e$ for $f\in\cW$ and $e
 \in\cW\orto$ then, there is a corresponding perturbation of the coefficients
 $T_\cG^* f$ given by $T_\cG^*e$: in this case, the squared norm (energy) of the perturbation  is $\|T_\cG^* e
 \| ^2= \langle S_\cG\, e \coma e \rangle$. Thus, we
 introduce the following

 \begin{fed}\label{fed frame aliasing}
Let $\cW,\, \cV\subset \hil$ be closed subspaces
such that $\cW\oplus \cV^\perp=\hil$. Let $\cF=\{f_i\}_{i\in I}$
and $\cG=\{g_i\}_{i\in I}$  be frames for $\cW$ and $\cV$
respectively such that $\cG\in \cD_\cV(\cF)$. Then, we define the
aliasing relative to the oblique dual pair $(\cF,\cG)$, denoted
$A(\cF,\cG)$ given by 
\beq 
A(\cF,\cG)= \sup_{e\in
\cW\orto\setminus\{0\}} \frac{\|T_\cG^* \, e\|}{\|e\|}=
\sup_{e\in \cW\orto\setminus\{0\}} \frac{\langle \, S_\cG \, e\,
,\, e\,\rangle^{1/2}}{\|e\|} \ .\EOEP
\eeq
 \end{fed}

\pausa
 With the notations of Definition \ref{fed frame aliasing}, notice that $A(\cF,\cG)$ is a
normalized measure of the relative incidence of $\cW\orto$ in the
analysis of perturbed signals in terms of $\cG$, in the sense that
$$
\|T_\cG^* \,e\|\leq A(\cF,\cG) \ \|e\| \ \ , \ \ e\in \cW\orto \ .$$

\pausa
There is an alternative interpretation of the aliasing $A(\cF,\cG)$ that is as follows:
with the previous notations, let $e\in \cW\orto$: then
$$ 
\|T_\cG^* \,e\|^2= \sum_{i\in I} |\langle e\coma g_i\rangle |^2=
\sum_{i\in I} |\langle e\coma P_{\cW\orto} g_i\rangle |^2 
=\|T_{\widehat \cG}^* \, e \|^2 \ ,
$$ 
where $\widehat \cG= \{P_{\cW\orto} g_i\}_{i\in\In}$ is considered as a finite sequence in $\cW\orto$. Therefore, 
$$
A(\cF,\cG)=\sup_{e\in\cW\orto\setminus\{0\}}  
\left(\sum_{i\in I} \frac{|\langle e\coma P_{\cW\orto}g_i\rangle |^2}{\|e\|^2}\right)^{1/2} 
=\sup_{e\in\cW\orto\setminus\{0\}}   \frac{\|T_{\widehat\cG}^* \ e \|}{\|e\|}\  $$
can be considered as a measure of the (normalized) residual sampling power of $\widehat\cG=P_{\cW\orto}\cdot\cG$ in $\cW\orto$.

\pausa
Assume further that $\cF=\{f_i\}_{i\in I}$ is a Parseval frame for $\cW$ i.e. $S_{\cF}=P_\cW$. Then, $\cF^\#_\cV\in \cD_\cV(\cF)$ is such that $S_{\cF^\#_\cV}=|P_{\cW//\cV^\perp}|^2$; hence in this case 
$$
\cA(\cF,\cF^\#_\cV)=\sup_{e\in \cW^\perp\setminus\{0\}}\frac{\langle S_{\cF^\#_\cV} \,e\coma e\,\rangle^{1/2}}{\|e\|}
=\sup_{e\in \cW^\perp\setminus\{0\}}\frac{\|P_{\cW//\cV^\perp}e\|}{\|e\|}=\cA(\cW,\cV)\ . $$

\pausa
In opposition to $A(\cW,\cV)$, the aliasing $A(\cF,\cG)$ depends on
the particular choice of oblique dual frames $(\cF,\,\cG)$
considered, and not only on the subspaces $\cV$ and $\cW$. 
Then, it is natural to consider the problem of designing frames
$\cF$ and $\cG$ for $\cW$ and $\cV$ respectively, such that
$(\cF,\cG)$ is an oblique dual pair and such that $A(\cF,\cG)$ is
minimum.
\begin{rem}\label{lo mejorrr}
Consider the notations \ref{notas}. 
Given $\cG\in\cD_\cV(\cF)$, it is easy to see from Definition \ref{fed frame aliasing} that
\beq\label{alinorm}
 A(\cF,\cG) = \|T^*_{\cG}\,P_{\cW^\perp}\|=
 \|P_{\cW^\perp} \,S_{\cG}\,P_{\cW^\perp}\|\rai 
\eeq
On the other hand, as a consequence of Proposition \ref{pro tec} we see 
that $S_{\cF^\#_\cV}\leq S_\cG \implies 
P_{\cW^\perp} \,S_{\cF^\#_\cV}\,P_{\cW^\perp}\leq P_{\cW^\perp} \,S_{\cG}\,P_{\cW^\perp} \,$. Therefore 
$$ 
%P_{\cW^\perp} S_{\cF^\#_\cV}P_{\cW^\perp}\leq P_{\cW^\perp} S_{\cG}P_{\cW^\perp} \implies 
A(\cF,\cF^\#_\cV)=\|P_{\cW^\perp}  S_{\cF^\#_\cV}P_{\cW^\perp}\|\rai 
\leq  \|P_{\cW^\perp} S_{\cG}P_{\cW^\perp}\|\rai =A(\cF,\cG)\ .
$$
That is, in this case the canonical oblique dual pair $(\cF,\cF^\#_\cV)$ minimizes the aliasing among all 
oblique dual pairs $(\cF,\cG)$ for $\cG\in\cD_\cV(\cF)$. \EOE
\end{rem} 

\pausa
Consider the notations \ref{notas} and let $U\in B(\hil)$ be a unitary operator such that $U(\cW)=\cW$. As shown in previous sections, the spectral structure of $S_{(U\cdot \cF)^\#_\cV}$ depends on the choice of such a unitary. Therefore it is natural to consider the unitary operators $U$ as before, that minimize the aliasing $A(U\cdot \cF, \cG)$ where $\cG\in \cD_\cV(U\cdot \cF)$. Remark \ref{lo mejorrr} shows that in this case we can restrict attention to the oblique dual pairs of the form $(U\cdot \cF,(U\cdot \cF)^\#_\cV)$ for a unitary operator $U\in B(\hil)$ such that $U(\cW)=\cW$. 
 The following result fits into the previous analysis
scheme and links optimal solutions of this problem to optimal
solutions of the problem considered in Theorem \ref{teo rotop}.

\begin{teo}\label{teo chau chau}
Consider the Notations \ref{notas}. 
Let $\{x_i\}_{i\in \IN{d}}\in \cW^d$ be an ONB of eigenvectors for $S_\cF$ on $\cW$ i.e., 
such that $S_\cF \, x_j=\lambda_j\,x_j$ for every $ j\in \IN{d}\,$. 
\begin{enumerate}
\item If $U\in \op$ is a unitary operator such that $U(\cW)=\cW$ then
$$
A(U\cdot\cF \coma (U\cdot\cF)^\#_\cV)\geq 
\max_{j\in\IN{d}} \ 
\frac{\tan(\theta_j)}{\la_{d-j+1}\rai} 
\ .
$$
\item If $U_0\in\op$ is a unitary operator such that $U_0 \, x_j= w_{d-j+1}$ for every $j\in\IN{d}\,$,
then 
$$
A(U_0\cdot\cF \coma  (U_0\cdot\cF)^\#_\cV)=
\max_{j\in\IN{d}} \ 
\frac{\tan(\theta_j)}{\la_{d-j+1}\rai} 
\ .
$$
In particular, the lower bound in item 1. above is sharp.
\end{enumerate}
\end{teo}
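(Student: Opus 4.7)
\bigskip
\noindent\textbf{Proof proposal.} The plan is to reduce both items to a spectral computation involving the positive operator $M=P_{\cW^\perp}\,P_{\cV//\cW^\perp}$ and the rotated canonical frame operator $K=U\,S_\cF^\dagger\, U^*$. First, combining Eq.\,\eqref{alinorm} from Remark \ref{lo mejorrr} with Eq.\,\eqref{el S vdual} from Theorem \ref{teo rotop} yields
$$
A(U\cdot\cF\coma (U\cdot\cF)^\#_\cV)^2
= \|P_{\cW^\perp}\,S_{(U\cdot\cF)^\#_\cV}\,P_{\cW^\perp}\|
= \|M\,K\,M^*\|\ .
$$
Using the elementary identity $\|MKM^*\|=\|K\rai\, M^*M\, K\rai\|=\|K\rai\,|M|^2\, K\rai\|$, everything reduces to estimating $\|K\rai\,|M|^2\,K\rai\|$. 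By Lemma \ref{norma prod}(2), the subspace $\cW$ reduces $|M|^2$ and $|M|^2|_\cW$ has the orthonormal eigenbasis $\{w_i\}_{i\in\IN{d}}$ with eigenvalues $\tan^2(\theta_i)$; also $\cW$ reduces $K$ with $\la(K|_\cW)=(\la_d\inv\coma\ldots\coma\la_1\inv)\in (\R_{>0}^d)\da\,$, which depends on $U$ only through the choice of eigenbasis.

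For item 2, I would specialize to $U=U_0$. From the defining relation $U_0\,x_j=w_{d-j+1}$ and $S_\cF\,x_j=\la_j\,x_j$ one computes
$$
K\,w_i=U_0\,S_\cF^\dagger\,U_0^*\,w_i=U_0\,S_\cF^\dagger\,x_{d-i+1}=\la_{d-i+1}\inv\,w_i \peso{for every}  i\in\IN{d}\ .
$$
Hence $K|_\cW$ and $|M|^2|_\cW$ are simultaneously diagonal in $\{w_i\}_{i\in\IN{d}}$, so $K\rai\,|M|^2\,K\rai$ has eigenvalues $\tan^2(\theta_i)/\la_{d-i+1}$ on the same basis. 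Taking the operator norm and then a square root produces the claimed equality.

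For item 1, I would apply the multiplicative Lidskii inequality of Theorem \ref{teo hay max y min mayo} inside the $d$-dimensional space $\cW$, with $S=K|_\cW\in \matinvd^+$ (whose spectrum is fixed independently of $U$) and with $V$ chosen so that $V^*V=|M|^2|_\cW$ (whose spectrum is $(\tan^2(\theta_d)\coma\ldots\coma\tan^2(\theta_1))$ in decreasing order). The left-hand log-majorization in \eqref{max y min logamyo} reads
$$
\la(K)\circ\la(|M|^2)\ua \prec_{\log} \la(V\,K\,V^*)=\la(|M|\,K\,|M|)\ .
$$
Extracting the $k=1$ entry (i.e.\ comparing maxima) and noting that the $j$-th entry of $\la(K)\circ\la(|M|^2)\ua$ is $\la_{d-j+1}\inv\,\tan^2(\theta_j)$, one obtains
$$
A(U\cdot\cF\coma (U\cdot\cF)^\#_\cV)^2=\||M|\,K\,|M|\|\geq \max_{j\in\IN{d}}\frac{\tan^2(\theta_j)}{\la_{d-j+1}}\ ,
$$
which yields item 1 after taking a square root; the sharpness statement is then immediate from item 2.

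The bookkeeping on orderings in the product $\la(K)\circ\la(|M|^2)\ua$ needs some care, but since only the $k=1$ case of the log-majorization is used, the $\max$ is insensitive to these re-orderings. The only point that requires a mild remark is the case $\theta_1=0$ (i.e.\ $\cV\cap\cW\ne\{0\}$), where $|M|^2$ fails to be invertible on $\cW$; this is handled by a standard perturbation/continuity argument applied to Theorem \ref{teo hay max y min mayo}, and does not alter the conclusion. I expect no substantial obstacle beyond carefully carrying out these identifications.
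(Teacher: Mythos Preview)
Your proof is correct and ultimately rests on the same two ingredients as the paper's: the identity $A(U\cdot\cF,(U\cdot\cF)^\#_\cV)^2=\|M\,K\,M^*\|$ with $M=P_{\cW^\perp}P_{\cV//\cW^\perp}$ and $K=U\,S_\cF^\dagger\,U^*$, together with Lemma \ref{norma prod}(2) and the multiplicative Lidskii inequality. Item 2 is handled identically.

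The route for item 1, however, is genuinely different. You apply Theorem \ref{teo hay max y min mayo} on the whole of $\cW$, taking the \emph{invertible} operator $S=K|_\cW$ and letting $V^*V=|M|^2|_\cW$; the possible non-invertibility of $|M|^2$ (when some $\theta_j=0$) is then dealt with by the continuity/perturbation argument you sketch. The paper instead restricts to the subspace $\cN=R(|M|)$, of dimension $k=\#\{j:\theta_j>0\}$, where $|M|$ is genuinely invertible; it then applies Theorem \ref{teo hay max y min mayo} on $\cN$ with $S=P_\cN\,K\,P_\cN|_\cN$ and uses Cauchy's interlacing inequalities to show $\mu_i:=\la_i(P_\cN K P_\cN)\geq \la_{k-i+1}^{-1}$, which is what feeds into the lower bound. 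Your approach is more streamlined (no interlacing step) at the cost of invoking a limiting argument outside the stated hypotheses of Theorem \ref{teo hay max y min mayo}; the paper's approach stays strictly within those hypotheses at the cost of the extra compression/interlacing step. Since only the $k=1$ entry of the log-majorization is actually used, both routes are equally painless in practice.
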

\proof 
Let $U\in \op$ be a unitary operator such that $U(\cW)=\cW$. Then, as in the proof of Theorem \ref{teo rotop}, we have that 
$$ S_{(U\cdot \cF)^\#_\cV}= P_{\cV//\cW\orto} \, U \, S_{\cF^\#} \,U^*\, P_{\cW//\cV\orto} $$
Therefore, by Eq. \eqref{alinorm} we deduce that 
\begin{equation}\label{ident Ali}
A((U\cdot \cF)^\#_\cV\coma U\cdot \cF)^2
%=\sup_{f\in \cW\orto\setminus\{0\}} \frac{\langle \, S_{(U\cdot \cF)^\#_\cV}\, f\,,\, f\, \rangle }{\|f\|^2}
= \| P_{\cW\orto} P_{\cV//\cW\orto} \, U \, S_{\cF^\#} \,U^*\, P_{\cW//\cV\orto} P_{\cW\orto}\|\ .
\end{equation}
On the other hand, by Lemma \ref{norma prod}, we have that 
\beq\label{los wi autos}
|P_{\cW\orto} P_{\cV//\cW\orto}|^2 \,w_i
=\tan^2(\theta_i) \,w_i \peso{for every} i\in\IN{d}\ .
\eeq
In particular, $\rk(|P_{\cW\orto} P_{\cV//\cW\orto}|)
=\#(\{i\in \IN{d} \ : \ \theta_i>0\})\igdef k$. Let $P$ denote the 
the orthogonal projection onto $\cN = R(|P_{\cW\orto} P_{\cV//\cW\orto}|)\,$. Then, by the interlacing inequalities 
(see \cite{Bat}) we get that the eigenvalues $\lambda(P(US_{\cF^\#}U^*)P)=(\mu_i)_{i\in \M}$ satisfy  that: 
\begin{equation}\label{desig mus}
\mu_i \geq \lambda_{(d-k+i)}(S_{\cF^\#})=\la_{k-i+1}^{-1} \ \text{ if } \  1\leq i\leq k  \py 
\mu_i=0 \ \text{ if } \ i\geq k+1 \ . 
\end{equation}
We apply Lidskii's multiplicative inequality in Theorem 
\ref{teo hay max y min mayo} (to the operators acting on $\cN$, where $|P_{\cW\orto} P_{\cV//\cW\orto}|$ acts as an invertible operator) and get
$$
\barr{rcl}
\big(\,\tan^2(\theta_{d-k+i})\,\big)_{i\in\IN{k}}\circ (\mu_i)_{i\in\IN{k}} 
& \prec_w &
\left(\lambda_i(|P_{\cW\orto} \,P_{\cV//\cW\orto}| \, 
(P\, U \, S_{\cF^\#} \,U^*\, P)\, 
| P_{\cW\orto}\, P_{\cW//\cV\orto}|)\right)_{i\in\IN{k}}
\\&&\\ 
& = &\left(\lambda_i(P_{\cW\orto} P_{\cV//\cW\orto} \, U \, S_{\cF^\#} \,U^*\, 
P_{\cW//\cV\orto} P_{\cW\orto})\right)_{i\in\IN{k}}\in\R^k \ ,
\earr
$$ 
where the last equality follows by taking polar decomposition of 
$P_{\cW\orto} P_{\cV//\cW\orto}\,$. 
In particular, using the sub-majorization relation, Eq. \eqref{ident Ali} and the inequalities in \eqref{desig mus} we see that 
$$ 
\barr{rl}
A(U\cdot \cF,(U\cdot \cF)^\#_\cV )^2&\geq \max\limits_{i\in\IN{k}}\Big\{\frac{\tan^2(\theta_{d-(k-i)})}{\lambda_{k-i+1}} 
\Big\} =\max\limits_{i\in\IN{d}}\Big\{\frac{\tan^2(\theta_{j})}{\lambda_{d-j+1}} 
\Big\}\ , \earr
$$
which shows item 1. %In order to  prove item 2, fix 
Fix the unitary $U_0 \in \op$ of item 2. %such that 
Then $(U_0 \, S_{\cF^\#_\cV}\,U_0^*)\, w_i= \la_{d-i+1}^{-1} \,w_j$ for $i\in \IN{d}\,$. Recall 
from Eq. \eqref{los wi autos} that 
 $|P_{\cW\orto} P_{\cV//\cW\orto}|\,w_i
 =\tan(\theta_i) \, w_i$ for $i\in\IN{d}\,$.  Then 
$$
\la\big(\,|P_{\cW\orto}\, P_{\cV//\cW\orto} |
\, U_0 \, S_{\cF^\#} \,U_0^*\, |P_{\cW\orto}\, P_{\cW//\cV\orto}|\, \big)
=\left(\, \left(\,\frac{\tan^2(\theta_{j})}{\lambda_{d-j+1}}\,\right)_{j\in\IN{d}}\da \coma 0_{|\M|-d}\,\right) \ . 
$$
This shows that canonical oblique dual pair corresponding to $U_0 \cdot \cF$ attains the minimal aliasing. 
\QED

\begin{rem}
Consider the Notations \ref{notas}.  If $\cG\in\cD_\cV(\cF)$ then the compression 
$$
(S_\cG)_{\cW^\perp}=P_{\cW^\perp}S_\cG\,|_{_{\cW^\perp}} \in B(\cW^\perp)^+
$$ 
of $S_\cG$ to $\cW^\perp$ is a (operator valued) measure of the incidence of $\cW^\perp$ in the 
encoding-decoding scheme based on the oblique dual pair $(\cF,\cG)$. By Eq. \eqref{alinorm}, 
%Definition \ref{fed frame aliasing}, 
it follows that $A(\cF,\cG)=\|(S_\cG)_{\cW^\perp}\|\rai$, where $\|T\|$ stands for the operator norm of $T\in B(\cW^\perp)$. We can consider other (natural) scalar valued measures 
of the form 
$$
A_h(\cF,\cG)= \tr \, h((S_\cG)_{\cW^\perp}) \ , 
$$
for $h\in\convf$ non-decreasing and such that $h(0)=0$ (which is well defined since $(S_\cG)_{\cW^\perp}$ is a finite rank positive operator and $h(0)=0$). A careful inspection of the proof of Theorem \ref{teo chau chau} shows that, with the notations of that result, $$A_h(U_0\cdot \cF,(U_0\cdot \cF)_\cV^\#) \leq A_h(U\cdot \cF,(U\cdot \cF)_\cV^\#) $$ for every 
unitary operator $U\in B(\hil)$ such that $U(\cW)= \cW$, i.e. that the rigid rotation $U_0$ has several other optimal properties.
\EOE
\end{rem}

\pausa
{\bf Acknowledgments}. We would like to thank professors Gustavo Corach and Jorge Antezana for fruitful conversations related with the content of this work.

{\scriptsize
}


\begin{thebibliography}{99}

\bibitem{AnAnCo} E. Andruchow, J. Antezana, G. Corach, 
Sampling formulae and optimal factorizations of projections. 
Sampl. Theory Signal Image Process. 7 (2008), no. 3, 313-331.

\bibitem{AnCo} J. Antezana, G. Corach. Singular value estimates of oblique projections. Linear Algebra Appl. 430 (2009), no. 1, 386-395.

\bibitem{Bat} R. Bhatia,  Matrix Analysis,
Berlin-Heildelberg-New York, Springer 1997.

\bibitem{BF} J.J. Benedetto, M. Fickus, Finite normalized tight frames. Frames. Adv. Comput. Math. 18 (2003), no. 2-4, 357-385.

\bibitem{CKFT} P.G. Casazza, M. Fickus, J. Kovacevic, M.T. Leon, J.C. Tremain, A physical interpretation of tight frames. Harmonic analysis and applications, 51--76, Appl. Numer. Harmon. Anal., Birkhäuser Boston, MA, 2006.

\bibitem{FF} Eds. P. G. Casazza and G. Kutyniok, Finite Frames: Theory and Applications,  Applied and Numerical Harmonic Analysis. Birkhäuser/Springer, New York, 2013.


\bibitem{Chrisbook} O. Christensen, An introduction to frames and Riesz bases. Applied and Numerical Harmonic Analysis. Birkhäuser Boston, Inc., Boston, MA, 2003.


\bibitem{YEldar3} O. Christensen, Y.C. Eldar, Oblique dual frames and shift-invariant spaces. Appl. Comput. Harmon. Anal. 17 (2004), no. 1, 48-68.

\bibitem{CE06} O. Christensen, Y.C. Eldar, Characterization of oblique dual frame pairs. EURASIP J. Appl. Signal Process. (2006), 1-11. 

\bibitem{CoGiri} G. Corach, J. Giribet. Oblique projections and sampling problems. Integral Equations Operator Theory 70 (2011), no. 3, 307-322.

\bibitem{CoMa10} G. Corach, A. Maestripieri, Polar decomposition of oblique projections. Linear Algebra Appl. 433 (2010), no. 3, 511-519.

\bibitem{Deu} F. Deutsch, The angle between subspaces of a Hilbert space. Approximation theory, wavelets and applications (Maratea, 1994), 107-130, NATO Adv. Sci. Inst. Ser. C Math. Phys. Sci., 454, Kluwer Acad. Publ., Dordrecht, 1995.


\bibitem{DvEld} T.G. Dvorkind, Y.C. Eldar, Robust and Consistent Sampling. IEEE Signal Process. Lett. 16(9) (2009), 739-742.

\bibitem{YEldar1} Y.C. Eldar, Sampling with arbitrary sampling and reconstruction spaces and oblique dual frame vectors. J. Fourier Anal. Appl. 9 (2003), no. 1, 77-96.

\bibitem{YEldar2} Y.C. Eldar, T. Werther, General framework for consistent sampling in Hilbert spaces. Int. J. Wavelets Multiresolut. Inf. Process. 3 (2005), no. 4, 497-509.



\bibitem{FMP}
M. Fickus, D. G. Mixon and M. J. Poteet,  Frame completions for
optimally robust reconstruction, Proceedings of SPIE, 8138: 81380Q/1-8 (2011).


\bibitem{GoLo} G. H. Golub and C. F. Van Loan, Matrix Computations, 3rd ed., Johns Hopkins University
Press, Baltimore, MD, 1996.

\bibitem{dhan} D. Han, Frame representations and Parseval duals with applications to Gabor frames. Trans. Amer. Math. Soc. 360 (2008), no. 6, 3307-3326.

\bibitem{HG07} A.A. Hemmat, J.P. Gabardo. The uniqueness of shift-generated duals for frames in shift-invariant subspaces. J. Fourier Anal. Appl. 13 (2007), no. 5, 589-606.


\bibitem{HG09} A.A. Hemmat, J.P. Gabardo, Properties of oblique dual frames in shift-invariant systems. J. Math. Anal. Appl. 356 (2009), no. 1, 346-354.


\bibitem{Janssen} A.J.E.M. Janssen,  The Zak transform and sampling theorems for wavelet subspaces, IEEE Trans. Signal Processing, 41 (1993), 3360-3364.

\bibitem{KA} A.V. Knyazev, M.E. Argentati, Principal angles between subspaces in an $A$-based scalar product: algorithms and
perturbation estimates. SIAM J. Sci. Comput. 23 (2002), no. 6, 2008-2040.





\bibitem{MR10} P. Massey, M. Ruiz, Minimization of convex functionals over frame operators. Adv. Comput. Math. 32 (2010), no. 2, 131-153.

\bibitem{MR12} P. Massey, M. Ruiz , D. Stojanoff,   Duality in reconstruction systems. Linear Algebra Appl. 436 (2012), no. 3, 447-464.

\bibitem{MRS13} P. Massey, M. Ruiz , D. Stojanoff,  Optimal dual frames and frame completions for majorization. Appl. Comput. Harmon. Anal. 34 (2013), no. 2, 201-223.

\bibitem{MRS14} P. Massey, M. Ruiz , D. Stojanoff, Multiplicative Lidskii's inequalities and optimal perturbations of frames (submitted).


\bibitem{Pot} M. J. Poteet, Parametrizing finite frames  and optimal frame completions, Doctoral thesis, Graduate School of Engineering and Management, Air Force Institute of Technology, 
Air University. 

\bibitem{BSi} B. Simon, Trace ideals and their applications. Second edition. Mathematical Surveys and Monographs, 120. American Mathematical Society, Providence, RI, 2005.

\bibitem{UnAl} M. Unser, A. Aldroubi, A general sampling theory for nonideal acquisition devices, IEEE Trans Signal Processing, vol. 42 (1994) no. 11, 2915-2925.
\end{thebibliography}
\end{document}